\numberwithin{equation}{section}
\newcommand{\nchi}{{\raise.3ex\hbox{\(\chi\)}}}
\newcommand{\N}{\mathbb{N}}
\newcommand{\Z}{\mathbb{Z}}
\newcommand{\R}{\mathbb{R}}
\newcommand{\sfd}{{\sf d}}
\renewcommand{\d}{{\mathrm d}}
\newcommand{\X}{{\rm X}}
\newcommand{\Y}{{\rm Y}}
\newcommand{\mm}{\mathfrak{m}}
\newcommand{\lip}{{\rm lip}}
\newcommand{\Tan}{{\rm Tan}}
\newcommand{\spt}{{\rm spt}}
\newcommand{\fr}{\penalty-20\null\hfill\(\blacksquare\)}
\newtheorem{theorem}{Theorem}[section]
\newtheorem{lemma}[theorem]{Lemma}
\newtheorem{proposition}[theorem]{Proposition}
\newtheorem{definition}[theorem]{Definition}
\newtheorem{remark}[theorem]{Remark}
\newtheorem{question}[theorem]{Question}
\title{Non-Hilbertian tangents to Hilbertian spaces}
\author[Danka Lu\v{c}i\'{c}]{Danka Lu\v{c}i\'{c}}
\address[Danka Lu\v{c}i\'{c}]{Universit\`{a} di Pisa,
Dipartimento di Matematica, Largo Bruno Pontecorvo 5,
56127 Pisa, Italy}
\email{danka.lucic@dm.unipi.it}
\author[Enrico Pasqualetto]{Enrico Pasqualetto}
\address[Enrico Pasqualetto]{Scuola Normale Superiore, Piazza dei
Cavalieri, 7, 56126 Pisa, Italy.}
\email{enrico.pasqualetto@sns.it}
\author[Tapio Rajala]{Tapio Rajala}
\address[Tapio Rajala]{University of Jyvaskyla, Department of Mathematics and Statistics, P.O.\ Box 35 (MaD), FI-40014
University of Jyvaskyla, Finland}
\email{tapio.m.rajala@jyu.fi}
\begin{document}
\date{\today}
\keywords{Tangent cone, infinitesimal Hilbertianity, PI space}
\subjclass[2020]{53C23, 51F30, 46E35}
\begin{abstract}
We provide examples of infinitesimally Hilbertian, rectifiable, Ahlfors regular metric
measure spaces having pmGH-tangents that are not infinitesimally Hilbertian.
\end{abstract}
\maketitle
\section{Introduction}
In the theory of metric measure spaces, one of the central themes is the investigation of the infinitesimal structure of the space under consideration,
which can be examined from different perspectives. On the one hand, an analytic approach consists in studying the behaviour of weakly differentiable
functions, which make perfectly sense even in this non-smooth framework thanks to \cite{Cheeger00,Shanmugalingam00,AmbrosioGigliSavare11},
where (equivalent) notions of a first-order Sobolev space have been introduced. On the other hand, a geometric viewpoint suggests that one looks at
the tangent spaces, obtained by taking limits of the rescalings of the space with respect to a suitable notion of convergence, typically induced by
the pointed measured Gromov--Hausdorff (pmGH) topology \cite{Edwards75,Gromov81,Fukaya87} or some of its variants. However, in full generality these
objects (scilicet, Sobolev functions and pmGH-tangents) may have little to do with the properties of the underlying space, as simple examples show.
Fortunately, the situation greatly improves under appropriate regularity assumptions. An instance of this fact is given by the class of PI spaces,
which are doubling metric measure spaces supporting a weak Poincar\'{e} inequality in the sense of Heinonen--Koskela \cite{HeinonenKoskela98}.
Indeed, as an outcome of Cheeger's results in \cite{Cheeger00}, we know that it is possible to develop a satisfactory first-order differential calculus
in PI spaces, as the latter verify a generalised form of Rademacher Theorem (concerning the almost everywhere differentiability of Lipschitz functions).
Additionally, every point of a PI space has non-empty pmGH-tangent cone (thanks to Gromov Compactness Theorem) and each pmGH-tangent is a PI
space itself (see \cite[Theorem 11.6.9]{HKST15}). In the literature, also the larger class of the so-called Lipschitz differentiability
spaces (LDS), \emph{i.e.}\ where the conclusion of Cheeger's Differentiation Theorem is satisfied, has been thoroughly
investigated, see \emph{e.g.}\ \cite{Bate15} after \cite{Kei:04,Keith04}. It was proved in \cite{Schioppa16} that pmGH-tangents to LDS are LDS,
but such tangents might be quite wild (for instance, they can be disconnected a.e., see \cite{Schioppa16-2}).
Let us also remark that under slightly stronger assumptions, namely for RNP differentiability spaces (where the LDS condition is required for Lipschitz
functions with values in Banach spaces satisfying the Radon--Nikod\'{y}m property) the tangents behave much better than for LDS spaces \cite{Eriksson-Bique2019}.
\smallskip

The present paper focuses on the properties of the pmGH-tangents to those metric measure spaces which `are Hilbertian at infinitesimal scales'.
In this regard, the relevant notion is called infinitesimal Hilbertianity \cite{Gigli12}. This assumption simply states that the 2-Sobolev space
is a Hilbert space and is very natural when dealing with various non-smooth generalisations of Riemannian manifolds, such as Alexandrov spaces or
\(\sf RCD\) spaces. Since the infinitesimal Hilbertianity condition concerns the differentials of Sobolev functions, one can expect it to be stable
only in some specific circumstances. As an indicator of this issue, just consider the fact that every metric measure space can be realised as the
pmGH-limit of a sequence of discrete spaces. A significant example of the stability of infinitesimal Hilbertianity is that of \(\sf RCD\) spaces,
which are infinitesimally Hilbertian spaces verifying the \({\sf CD}(K,N)\) condition, which imposes a lower bound \({\rm Ric}\geq K\) on the Ricci
curvature and an upper bound \({\rm dim}\leq N\) on the dimension, in some synthetic form. It follows from \cite{AmbrosioGigliSavare11-2} that
the class of \({\sf RCD}(K,N)\) spaces is closed under pmGH-convergence. Heuristically, even though the pmGH-convergence is a zeroth-order concept while the
Hilbertianity is a first-order one, the stability of the latter is enforced by the uniformity at the level of the second-order structure (encoded
in the common lower Ricci bounds). Another example can be found in \cite{LucicPasqualetto21}, where it is shown that the infinitesimal Hilbertianity is
preserved along sequences of metric measure spaces where the measure is fixed, while distances monotonically converge from below to the limit one.
In this case, the stability is in force for arbitrary metric measure spaces (with no additional regularity, such as \(\sf RCD\) spaces), but the
notion of convergence is much stronger than the pmGH one.
\smallskip

The problem we address in this paper is the following: \emph{given an infinitesimally Hilbertian metric measure space
(which fulfills further regularity assumptions), are its pmGH-tangents infinitesimally Hilbertian as well?}
The case of \({\sf RCD}(K,N)\) spaces is already settled, as a consequence of the previous discussion. Indeed, if \((\X,\sfd,\mm)\) is an
\({\sf RCD}(K,N)\) space, then for any radius \(r>0\) the rescaled space \((\X,\sfd/r,\mm_x^r,x)\), where \(\mm_x^r\) is the normalised measure
\[
\mm_x^r\coloneqq\frac{\mm}{\mm(B_r(x))},
\]
is an \({\sf RCD}(r^2 K,N)\) space.
In particular, each pmGH-tangent to \((\X,\sfd,\mm)\) at \(x\) (whose existence is guaranteed by Gromov Compactness Theorem)
is an \({\sf RCD}(0,N)\) space, thanks to the stability of the \(\sf RCD\) condition. Nevertheless, besides \(\sf RCD\) spaces, we will mostly obtain negative results.
Before passing to the statement of our first result, we need to fix some more terminology. Given a point \(x\in{\rm spt}(\mm)\) of a metric measure space \((\X,\sfd,\mm)\),
we denote by \({\rm Tan}_x(\X,\sfd,\mm)\) its pmGH-tangent cone. Moreover, we say that a metric measure space \((\X,\sfd,\mm)\) is \(\mm\)-rectifiable
provided it can be covered \(\mm\)-a.e.\ by Borel sets \(\{U_i\}_{i\in\N}\) that are biLipschitz equivalent to Borel sets in \(\R^{n_i}\) and satisfy
\(\mm|_{U_i}\ll\mathcal H^{n_i}\); notice that we are not requiring that \(\{n_i\}_{i\in\N}\subset\N\) is a bounded sequence. Under a (pointwise)
doubling assumption, the \(\mm\)-rectifiability requirement entails a very rigid behaviour of the pmGH-tangents, which are almost everywhere unique
and consist of a finite-dimensional Banach space, whose norm can be computed by looking at the blow-ups of the chart maps, together with the induced
(normalised) Hausdorff measure; see Proposition \ref{prop:tan_to_rect}. Nevertheless, the ensuing result holds:
\begin{theorem}\label{thm:example_non_PI}
There exists an infinitesimally Hilbertian, \(\mm\)-rectifiable, Ahlfors regular metric measure space \((\X,\sfd,\mm)\) such that
for \(\mm\)-a.e.\ point \(x\in\X\) the tangent cone \(\Tan_x(\X,\sfd,\mm)\) contains a unique, infinitesimally non-Hilbertian element.
\end{theorem}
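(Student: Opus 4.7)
The plan is to realize $\X$ as a bounded subset of $\R^n$ (say the unit cube $[0,1]^n$) equipped with the Lebesgue measure $\mm=\mathcal L^n$ and a distance $\sfd$ biLipschitz equivalent to the Euclidean one $|\cdot|$. With these choices, Ahlfors $n$-regularity and $\mm$-rectifiability (with the identity as chart) are automatic. By Proposition \ref{prop:tan_to_rect}, the tangent at $\mm$-a.e.\ $x$ is then uniquely identified with a Banach space $(\R^n,\|\cdot\|_x)$ whose norm is the blow-up of the chart map, so the task reduces to engineering $\sfd$ so that this blow-up is a \emph{fixed} non-Hilbertian norm $\|\cdot\|_\ast$ (for instance $\|\cdot\|_\infty$) at $\mm$-a.e.\ point, while ensuring that $W^{1,2}(\X,\sfd,\mm)$ is Hilbert.

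For the construction, I would define $\sfd$ inductively as a limit of distances $\sfd_k$, with $\sfd_0=|\cdot|$ and $\sfd_{k+1}$ obtained from $\sfd_k$ by a localized perturbation at a geometrically decreasing scale $\eps_k\downarrow 0$, chosen so that on each cube of side $\eps_k$ the geometry becomes close (after rescaling) to $(\R^n,\eps_k\|\cdot\|_\ast)$. A martingale/Vitali-type covering argument, together with a Borel--Cantelli step, ensures that for $\mm$-a.e.\ $x$ the rescaled distances $\sfd/r$ converge as $r\downarrow 0$ to $\|\cdot\|_\ast$ in the pmGH sense; uniqueness of the tangent then follows. Keeping $\sum_k\eps_k^0$-type amplitudes summable guarantees that $\sfd$ remains uniformly biLipschitz to $|\cdot|$, so Ahlfors regularity is retained.

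The infinitesimal Hilbertianity is the core step. Since $\sfd$ is biLipschitz to $|\cdot|$ one has the inclusion of Sobolev spaces with equivalent norms, so the point is to identify the minimal weak upper gradient. I would show that, thanks to the highly oscillatory nature of the perturbations, the Euclidean test plan (integration along segments parallel to the coordinate axes with $\mathcal L^n$-distributed endpoints, rescaled) is a test plan for $(\X,\sfd,\mm)$, and that its action on Lipschitz functions detects exactly the Euclidean partial derivatives. This forces $|Df|_{\sfd}\ge|\nabla f|_{\mathrm{Eucl}}$ $\mm$-a.e., and a matching upper bound is produced by approximating Sobolev functions by convolutions and using that segment lengths in $\sfd$ coincide with Euclidean ones up to an error that vanishes in the integral thanks to the oscillating perturbations. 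Equality of the weak upper gradient with $|\nabla f|_{\mathrm{Eucl}}$ then makes $W^{1,2}(\X,\sfd,\mm)$ isometric to the classical Hilbert Sobolev space on $\R^n$.

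The main obstacle is calibrating this tension: the perturbations must be \emph{seen} by the blow-up (a first-order metric concept, sensitive to the supremum of distortions) but \emph{unseen} by test plans (a measure-theoretic, averaged concept), so their supports have to be arranged to occupy a small fraction of each scale while still covering almost every point in the limit. This requires a delicate choice of the amplitudes and of the supports at each level $k$, and the verification that the Euclidean test plan survives under $\sfd$ rests on a quantitative comparison between the length of a typical segment in $\sfd_k$ and in $|\cdot|$, estimated by summing the perturbation errors. Once this comparison is in place, the remaining verifications (pmGH-convergence to $(\R^n,\|\cdot\|_\ast,\mathcal L^n)$ and identification of the weak upper gradient) are standard, and yield a space meeting all the requirements of Theorem \ref{thm:example_non_PI}.
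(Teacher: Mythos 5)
Your plan has a fatal structural flaw, and the obstruction is precisely the paper's Theorem \ref{thm:generic_tan_PI_rect}. The space you propose is $[0,1]^n$ with $\mm=\mathcal L^n$ and a distance $\sfd$ biLipschitz to the Euclidean one. Such a space is automatically an $\mm$-rectifiable PI space: doubling and the weak $(1,2)$-Poincar\'e inequality are stable under biLipschitz changes of the distance (slopes and balls change by bounded factors). Proposition \ref{prop:equiv_iH_rect} then says that infinitesimal Hilbertianity of such a space is \emph{equivalent} to the blow-up norms $\|\cdot\|_x$ being Hilbert for $\mm$-a.e.\ $x$, and Proposition \ref{prop:tan_to_rect} identifies these same norms with the tangents. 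So you cannot have the tangent equal to $\|\cdot\|_\infty$ a.e.\ and simultaneously a Hilbert Sobolev space: the two requirements you are trying to calibrate against each other are provably incompatible on a set of full measure in a PI space. Concretely, the step that fails is the ``matching upper bound'' for the minimal weak upper gradient. By Kirchheim's theorem (cf.\ \eqref{eq:norm_x_prop}) one has $\sfd(y,z)=\|y-z\|_x+o(|y-z|)$ near a.e.\ $x$, so for a linear function $f(y)=\langle\xi,y\rangle$ the slope $\lip(f)(x)$ computed in $\sfd$ equals the dual norm $\|\xi\|_x^*$; on a PI space Cheeger's theorem forces the minimal weak upper gradient of a Lipschitz function to coincide a.e.\ with its slope, so the Cheeger energy of $f$ on a Borel set $E$ is $\int_E(\|\xi\|_x^*)^2\,\d\mm$, which violates the parallelogram identity as soon as $\|\cdot\|_x$ is a.e.\ a fixed non-Hilbert norm. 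The oscillatory perturbations cannot be made ``unseen by test plans'': the blow-up norm governs the $\sfd$-length of a.e.\ segment, hence of the very curves that any Poincar\'e-type test plan must use.

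The paper circumvents exactly this by destroying the curve family instead of trying to hide the perturbation from it: it takes $\X=C\times C$ with $C\subset\R$ a Cantor set of positive measure, $\sfd(a,b)=\|a-b\|_1$ and $\mm=\mathcal L^2|_\X$. The space is totally disconnected, hence carries no non-constant absolutely continuous curves, so $H^{1,2}(\X)=L^2(\X)$ isometrically and infinitesimal Hilbertianity is trivial; since $\X$ has positive Lebesgue density, Lemma \ref{lem:local_tan} shows that the tangents agree $\mm$-a.e.\ with those of $(\R^2,\|\cdot\|_1,\mathcal L^2)$, which by Proposition \ref{prop:tan_to_rect} consist uniquely of the non-Hilbertian space $(\R^2,\|\cdot\|_1,\underline{\mathcal H}^2_{\|\cdot\|_1},0)$. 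To salvage your idea you must give up the Poincar\'e inequality---by Theorem \ref{thm:generic_tan_PI_rect} you have no choice---for instance by replacing the full cube with a positive-measure totally disconnected set, which is exactly the paper's construction.
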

We will prove Theorem \ref{thm:example_non_PI} in Section \ref{s:proof_thm1}. The key idea behind its proof is to construct a space whose `analytic dimension'
is zero (or one), so that the associated Sobolev space is necessarily Hilbert, but whose pmGH-tangents are two-dimensional and not Hilbertian. This kind of
situation is possible because we are not requiring the validity of a weak Poincar\'{e} inequality. In fact, when dealing with \(\mm\)-rectifiable PI spaces,
the situation improves considerably, as we will see later on. However, even in this case there can exist infinitesimally non-Hilbertian pmGH-tangents to
infinitesimally Hilbertian spaces:
\begin{theorem}\label{thm:example_PI}
There exists an infinitesimally Hilbertian, \(\mm\)-rectifiable, Ahlfors regular PI space \((\X,\sfd,\mm)\) such that
\(\Tan_{\bar x}(\X,\sfd,\mm)\) contains an infinitesimally non-Hilbertian element for some point \(\bar x\in\X\).
In addition, one can require that \((\X\setminus\{\bar x\},\sfd,\mm)\) is a Riemannian manifold.
\end{theorem}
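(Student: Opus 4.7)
The plan is to construct a concrete two-dimensional example $(\X,\sfd,\mm)$ whose length metric is induced by a smooth Riemannian metric off one point $\bar x$ but degenerates at $\bar x$ through a homogenisation-type mechanism whose effective (stable) norm is not Euclidean. I would take $\X=\R^2$, $\bar x=0$ and $\mm=c\,\mathcal L^2$, and let $\sfd$ be the length distance of a $C^\infty$ Riemannian metric $g$ on $\R^2\setminus\{0\}$ chosen uniformly bi-Lipschitz equivalent to the Euclidean one. Once this bi-Lipschitz equivalence is secured, Ahlfors $2$-regularity, doubling, $\mm$-rectifiability (with the single chart $\X\setminus\{\bar x\}\subset\R^2$ and $\mm(\{\bar x\})=0$), and the $(1,1)$-Poincar\'{e} inequality all transfer from $(\R^2,d_{\mathrm{euc}},\mathcal L^2)$ without difficulty. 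Moreover, $(\X\setminus\{\bar x\},\sfd,\mm)$ is by construction a Riemannian manifold.

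The key step is the design of $g$ near $\bar x$. I would use the classical fact that there exist $\Z^2$-periodic smooth Riemannian metrics $g_0$ on $\R^2$, uniformly bi-Lipschitz to Euclidean, whose stable norm
\[
\|v\|\coloneqq\lim_{t\to\infty}\frac{d_{g_0}(0,tv)}{t},\qquad v\in\R^2,
\]
is not Euclidean (Hedlund-type examples, whose unit ball is e.g.\ a hexagon). Pick scales $r_n\downarrow 0$ and sub-scales $\eta_n\ll r_n$; on each annulus $A_n=\{r_{n+1}\le|p|_{\mathrm{euc}}\le r_n\}$ I would put a rescaled copy of $g_0$ with period $\eta_n$, and glue smoothly across consecutive annular boundaries via cut-off functions and small interpolations that preserve the uniform bi-Lipschitz comparability with $g_{\mathrm{euc}}$. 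Outside a neighbourhood of $\bar x$ the metric can simply be taken Euclidean, and on $\R^2\setminus\{\bar x\}$ the result is a $C^\infty$ Riemannian metric.

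To extract the non-Hilbertian tangent at $\bar x$, I rescale $B_{r_n}(\bar x)$ by $1/r_n$: in these coordinates the metric has periodic oscillations on sub-scale $\eta_n/r_n\to 0$ within a unit-size region, so by the definition of the stable norm distances converge as $n\to\infty$ to those induced by $\|\cdot\|$ on the Euclidean unit ball. Combined with the convergence of the normalised $2$-dimensional Lebesgue measure, this yields that $(\X,\sfd/r_n,\mm/\mm(B_{r_n}(\bar x)),\bar x)$ pmGH-converges to $(\R^2,\|\cdot\|,\mathcal L^2,0)$. A finite-dimensional normed space is infinitesimally Hilbertian iff its norm is Euclidean, so this tangent is not infinitesimally Hilbertian. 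On the other hand, $(\X,\sfd,\mm)$ itself is infinitesimally Hilbertian, because $H^{1,2}(\X,\sfd,\mm)$ is determined by the behaviour of its elements on the full-measure set $\X\setminus\{\bar x\}$, where $\sfd$ comes from an inner product on each tangent space and hence the parallelogram identity for Sobolev norms is available.

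The hard part is the scale-nested gluing together with the quantitative homogenisation: one must pick the sub-scales $\eta_n$ small enough for the pmGH-blow-up at $\bar x$ to actually see the stable norm $\|\cdot\|$ (and not merely some subsequential limit), while keeping the global metric $g$ genuinely $C^\infty$ on $\R^2\setminus\{\bar x\}$ with a uniformly bounded bi-Lipschitz distortion with respect to Euclidean, as required to inherit the PI condition. One also has to exhibit a sufficiently explicit $\Z^2$-periodic $g_0$ with non-Euclidean stable norm and a controlled rate of convergence of $d_{g_0}(0,tv)/t$ to $\|v\|$. Upgrading this rate-of-convergence estimate for distances into pmGH-convergence of the rescaled metric measure spaces across the smoothly glued annuli is where the main technical content of the construction lies.
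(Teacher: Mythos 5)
Your plan is in essence the same as the paper's: take $\X=\R^2$, $\bar x=0$, and a Riemannian metric on $\R^2\setminus\{0\}$, uniformly bi-Lipschitz to the Euclidean one (so that Ahlfors regularity, rectifiability and the PI condition transfer, and infinitesimal Hilbertianity holds because the metric differential is a.e.\ an inner-product norm), built from periodic oscillations whose period shrinks faster than the distance to the origin, so that the blow-up at $0$ is the normed plane given by the effective (stable) norm. The difference is in how the homogenisation step is realised. You invoke Hedlund-type $\Z^2$-periodic metrics and their stable norms, and you correctly identify that the quantitative convergence $d_{g_0}(0,tv)/t\to\|v\|$, uniformly enough across the nested annuli, is the main technical content --- but you leave exactly that step undone, so as written the proposal is an outline rather than a proof. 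The paper sidesteps this entirely by choosing the periodic structure to be \emph{conformal} and explicitly computable: a weight $\rho$ equal to $1$ on a dyadic grid of lines and (approximately) $2$ inside the squares, arranged on a Whitney decomposition of $\R^2\setminus\{0\}$ with sub-square side $4^k$ inside the Whitney square of side $2^k$. For this choice the effective norm is exactly $\|\cdot\|_1$, with an elementary upper bound (route the geodesic along the grid) and an elementary lower bound (project the curve onto the two coordinates and note that the weight $2$ off the grid dominates $|\dot\gamma^1|+|\dot\gamma^2|$); see Lemma \ref{lem:bound_d_rho}. The issue you raise about making the metric genuinely smooth while keeping the oscillation ``deep enough'' at each scale is handled in the paper by a soft Dini-type argument: smooth profiles $\psi_n\nearrow\psi_\infty$ force $\sfd_{\rho^k_n}\rightrightarrows\sfd_{\rho_\infty}$ on the compact set $N_k\times N_k$, so one can pick $n(k)$ with an explicit error $4^{-(k+2)}$. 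Two smaller points to watch in your version: the limit of the normalised pushforward measures need not be $\mathcal L^2$ (the volume densities oscillate as well), but any limit measure $\mu$ with $0\in\spt(\mu)$ suffices, as in the paper; and if you take $\mm=c\,\mathcal L^2$ rather than the Riemannian volume, the space off $\bar x$ is a weighted Riemannian manifold, whereas the paper uses $\mathcal H^2_{\sfd_\rho}$, which coincides with the Riemannian volume on $\R^2\setminus\{0\}$.
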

The proof of Theorem \ref{thm:example_PI} is more involved and will be carried out in Section \ref{s:proof_thm2}. Roughly speaking, the strategy is to define
a Riemannian metric on \(\R^2\setminus\{0\}\) of the form \(\rho|\cdot|\), where \(\rho\) is a smooth function which is discontinuous at \(0\), so that its
induced length distance behaves like the \(\ell^1\)-norm when we zoom the space around \(0\). This way, we obtain an infinitesimally Hilbertian space whose
pmGH-tangent at \(0\) is not.
\begin{remark}{\rm
One might wonder in Theorem \ref{thm:example_PI} what can be said about the curvature of the Riemannian metric outside \(\{\bar x\}\). Suppose, for instance,
that \((\X,\sfd)\) is Ahlfors \(n\)-regular and has Ricci-curvature lower bound \(\kappa(x)\) in a neighbourhood of \(x\in\X\setminus\{\bar x\}\). Then, if
\(\kappa\in L^p(\X)\) with \(p>n/2\), by the scaling of the Ricci-curvature lower bound, we have that 
\[
\int_{(\X,\lambda\sfd)} |\kappa_\lambda(x)|^p\,\d\mathcal H^n(x) = 
\lambda^{n-2p}\int_{(\X,\sfd)} |\kappa(x)|^p\,\d\mathcal H^n(x) \to 0
\]
as \(\lambda \to \infty\), where \(\kappa_\lambda(x)\) is the Ricci-curvature lower bound at \(x\) for the blow-up \((\X,\lambda\sfd,\bar x)\). Consequently,
any measured Gromov--Hausdorff tangent at \(\bar x\) will be a flat space \cite{PetersenWei97} and in particular, the tangent will be infinitesimally Hilbertian.

It is not difficult to see that by scaling down the construction steps depending on their curvature lower bounds, the construction for Theorem \ref{thm:example_PI}
in an \(n\)-dimensional space, \(n \geq 3\), could be modified to have an integrable Ricci-curvature lower bound \(\kappa \in L^p(\X)\) for any given \(p < n/2\).
Notice that in general any compact length space can be approximated in the Gromov--Hausdorff distance by compact manifolds having \(L^{n/2}\)-integrable curvature,
see \cite{Aubry07}.

We also remark that the stability of integrable Ricci-curvature lower bounds in metric measure spaces, for \({\sf CD}(\kappa,N)\) spaces are known for continuous
lower bounds \(\kappa \in L^p\) with \(p > N/2\), see \cite{Ketterer21}.
\fr}\end{remark}
However, the phenomenon observed in Theorem \ref{thm:example_PI} cannot take place in a set of points of positive measure. This is the content of the next result:
\begin{theorem}\label{thm:generic_tan_PI_rect}
Let \((\X,\sfd,\mm)\) be an infinitesimally Hilbertian, \(\mm\)-rectifiable PI space. Then for \(\mm\)-a.e.\ point \(x\in\X\)
the unique element of \(\Tan_x(\X,\sfd,\mm)\) is infinitesimally Hilbertian.
\end{theorem}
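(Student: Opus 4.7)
The plan is to use \(\mm\)-rectifiability and Proposition \ref{prop:tan_to_rect} to exhibit the tangent at \(\mm\)-a.e.\ point as a finite-dimensional Banach space, and then to translate the infinitesimal Hilbertianity of \(\X\) into a pointwise parallelogram law for the norm on that tangent. To begin, by Proposition \ref{prop:tan_to_rect}, for \(\mm\)-a.e.\ \(x\) the set \(\Tan_x(\X,\sfd,\mm)\) is a single point, namely \((V_x,\|\cdot\|_x,c_x\mathcal H^{n(x)},0)\), where \(V_x\) is identified with \(\R^{n(x)}\) via the blow-up of the local biLipschitz chart \(\varphi:U\to\R^{n(x)}\) coming from the rectifiable atlas and \(\|\cdot\|_x\) is the limit norm produced by that blow-up. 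Since a finite-dimensional normed space with Lebesgue (equivalently, Hausdorff) measure is infinitesimally Hilbertian if and only if its norm is induced by an inner product, it suffices to show that for \(\mm\)-a.e.\ \(x\) the norm \(\|\cdot\|_x\) satisfies the parallelogram identity.

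Next I would use the PI assumption to compute minimal weak upper gradients in terms of the tangent norm. By Cheeger's theorem, in a PI space the minimal weak upper gradient \(|Df|\) of a Lipschitz function \(f\) coincides \(\mm\)-a.e.\ with its pointwise Lipschitz constant \(\lip f\). Applying this to functions of the form \(f_a \coloneqq a\cdot\varphi\), with \(a\in\R^{n(x)}\) and \(\varphi\) the chart around \(x\), a direct blow-up computation using the differentiability of affine maps and the definition of \(\|\cdot\|_x\) gives
\[
|Df_a|(x) \;=\; \lip f_a(x) \;=\; \sup_{\|v\|_x\leq 1}|a\cdot v| \;=\; \|a\|_{*,x},
\]
where \(\|\cdot\|_{*,x}\) denotes the dual norm on \(V_x^*\cong\R^{n(x)}\); here one uses that \(\varphi\) is biLipschitz to avoid the subtleties of Cheeger charts in general LDS. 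Covering \(\mm\)-a.e.\ of \(\X\) by countably many such charts, this identification holds simultaneously for all rational \(a\) on a set of full \(\mm\)-measure.

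Finally, infinitesimal Hilbertianity of \((\X,\sfd,\mm)\) is equivalent to the pointwise parallelogram law
\[
|D(f+g)|^2(x)+|D(f-g)|^2(x) \;=\; 2\bigl(|Df|^2(x)+|Dg|^2(x)\bigr)
\]
holding \(\mm\)-a.e.\ for every \(f,g\) in a countable dense family of Lipschitz functions. Applying it to \(f=f_a\) and \(g=f_b\) for all rational \(a,b\in\R^{n(x)}\), and extending by continuity in \(a,b\), we obtain
\[
\|a+b\|_{*,x}^2+\|a-b\|_{*,x}^2 \;=\; 2\bigl(\|a\|_{*,x}^2+\|b\|_{*,x}^2\bigr) \qquad\forall\,a,b\in\R^{n(x)},
\]
for \(\mm\)-a.e.\ \(x\). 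Hence \(\|\cdot\|_{*,x}\), and therefore its predual \(\|\cdot\|_x\), comes from an inner product, so the unique tangent is a finite-dimensional Hilbert space with Lebesgue measure and is thus infinitesimally Hilbertian.

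The main obstacle I expect is the second step: making rigorous the identification \(|Df_a|(x)=\|a\|_{*,x}\) \(\mm\)-a.e. This requires joining two pieces of information at the same point \(x\), namely the Cheeger identity \(|Df|=\lip f\) (which needs the PI structure and a countable exceptional set) and the existence of the chart blow-up producing \(\|\cdot\|_x\) (which needs density points for the chart domain and differentiability of affine functions along the blow-up). Both hold outside an \(\mm\)-negligible set, but verifying that the two exceptional sets can be taken compatibly, and that the \(\limsup\) defining \(\lip f_a(x)\) really equals the dual norm \(\|a\|_{*,x}\) rather than an inequality in one direction, is the technical heart of the argument.
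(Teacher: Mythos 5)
Your argument is essentially the paper's: identify the a.e.\ unique tangent as \(\big(\R^{n(x)},\|\cdot\|_x,\underline{\mathcal H}^{n(x)}_x,0\big)\) via Proposition \ref{prop:tan_to_rect}, then reduce everything to showing that \(\|\cdot\|_x\) is a Hilbert norm for \(\mm\)-a.e.\ \(x\). The only difference is that the paper delegates both directions of that reduction to Proposition \ref{prop:equiv_iH_rect} (itself a combination of citations to \cite{IPS21}, \cite{Cheeger00}, and \cite{ErikssonBiqueSoultanis21}), whereas you sketch a proof of the hard direction yourself via the identity \(|Df_a|=\lip f_a=\|a\|_{*,x}\) and the pointwise parallelogram law; the technical point you flag (reconciling the global extension of \(f_a\) off the chart domain with the blow-up norm at density points) is precisely what those cited results take care of.
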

Section \ref{s:proof_thm3} will be devoted to the proof of Theorem \ref{thm:generic_tan_PI_rect}, which is in fact only a combination of several
results that were already available in the literature.
\medskip

One might wonder whether the \(\mm\)-rectifiability assumption in Theorem \ref{thm:generic_tan_PI_rect} can be dropped.
In other words, a natural question is the following: 
\begin{question}\label{question:generic_PI}
{\rm Is it true that if \((\X,\sfd,\mm)\) is an infinitesimally
Hilbertian PI space, then \(\Tan_x(\X,\sfd,\mm)\) contains only infinitesimally Hilbertian elements for \(\mm\)-a.e.\ \(x\in\X\)?}
\end{question}
We are currently unable to address this question, thus we leave it as an open problem. We point out that a key result on PI spaces,
concerning the relation between the (analytic) differential structure and the (geometric) pmGH-tangents, is \cite[Theorem 1.12]{CheegerKleinerSchioppa16}
(see also the related earlier results in \cite{Cheeger00}). This result says, roughly, that for \(\mm\)-a.e.\ point \(x\) of a PI space \((\X,\sfd,\mm)\)
and for every pmGH-tangent \((\Y,\sfd_\Y,\mm_\Y,q)\in\Tan_x(\X,\sfd,\mm)\), one can construct a pointed blow-up map \(\hat\varphi\colon\Y\to T_x\X\) (obtained by
rescaling a chart \(\varphi\)) which is a metric submersion. Here, \(T_x\X\) stands for a fiber of the tangent bundle \(T\X\),
in the sense of Cheeger \cite{Cheeger00}. In the case where \((\X,\sfd,\mm)\) is infinitesimally Hilbertian, we have that \(T_x\X\)
is Hilbert for \(\mm\)-a.e.\ \(x\in\X\), see \cite[Section 2.5\textbf{(3)}]{Gigli14}. Nevertheless, since \(\hat\varphi\colon\Y\to T_x\X\)
is only a (possibly non-injective) metric submersion, there might be more independent directions on the tangent $\Y$ and thus
we cannot deduce from this information that the space \((\Y,\sfd_\Y,\mm_\Y)\) is infinitesimally Hilbertian.
It is an open problem whether the tangent spaces can really have more independent directions on positively many points.
A negative answer to this would resolve Question \ref{question:generic_PI} in the affirmative.
\smallskip

We conclude by mentioning that similar results hold also if one considers pmGH-asymptotic cones instead of pmGH-tangent cones. We commit the discussion on the relation
between infinitesimal Hilbertianity and asymptotic cones to Appendix \ref{app:asympt_cones}.
\subsection*{Acknowledgements}
The authors thank Adriano Pisante for having suggested the problem, as well as Nicola Gigli,
Sylvester Eriksson-Bique, and Elefterios Soultanis for the useful comments and discussions.
The first named author acknowledges the support from the project 2017TEXA3H ``Gradient flows, Optimal Transport and Metric Measure Structures'', funded by the Italian
Ministry of Research and University. The second named author acknowledges the support from the Balzan project led by Luigi Ambrosio.
The third named author acknowledges the support from the Academy of Finland, grant no.\ 314789.
\section{Preliminaries}
Let us begin by fixing some general terminology. For any exponent \(p\in[1,\infty]\), we denote by \(\|\cdot\|_p\) the \(\ell^p\)-norm on \(\R^n\),
namely, for every vector \(v=(v_1,\ldots,v_n)\in\R^n\) we define
\[
\|v\|_p\coloneqq\left\{\begin{array}{ll}
\big(|v_1|^p+\ldots+|v_n|^p\big)^{1/p},\\
\max\big\{|v_1|,\ldots,|v_n|\big\},
\end{array}\quad\begin{array}{ll}
\text{ if }p<\infty,\\
\text{ if }p=\infty.
\end{array}\right.
\]
For brevity, we will often write \(|\cdot|\) in place of \(\|\cdot\|_2\). The Euclidean distance on \(\R^n\) will be denoted
by \(\sfd_{\rm Eucl}(v,w)\coloneqq|v-w|\). By \(\mathcal L^n\) we mean the Lebesgue measure on \(\R^n\). Given an arbitrary
metric space \((\X,\sfd)\), we indicate with \(B_r(x)\), or with \(B_r^\sfd(x)\), the open ball in \((\X,\sfd)\) of radius
\(r>0\) and center \(x\in\X\). For any \(k\in[0,\infty)\), the \(k\)-dimensional Hausdorff measure on \((\X,\sfd)\) will be
denoted by \(\mathcal H^k\), or by \(\mathcal H^k_\sfd\).
\subsection{Metric measure spaces}
By a \textbf{metric measure space} \((\X,\sfd,\mm)\) we mean a complete, separable metric space \((\X,\sfd)\), together
with a boundedly-finite, Borel measure \(\mm\geq 0\) on \(\X\). Several (equivalent) notions of \textbf{Sobolev space}
over \((\X,\sfd,\mm)\) have been investigated in the literature, see for instance \cite{Cheeger00,Shanmugalingam00,AmbrosioGigliSavare11}
and \cite{AmbrosioGigliSavare11-3} for the equivalence between them. We follow the approach by Cheeger \cite{Cheeger00}: we declare that
a given function \(f\in L^2(\X)\) belongs to the Sobolev space \(H^{1,2}(\X)\) provided there exists a sequence \((f_n)_{n\in\N}\) of
boundedly-supported Lipschitz functions \(f_n\colon\X\to\R\) such that \(f_n\to f\) in \(L^2(\X)\) and \(\sup_{n\in\N}\int\lip(f_n)^2\,\d\mm<+\infty\),
where the \textbf{slope} function \(\lip(f_n)\colon\X\to[0,+\infty)\) is defined as
\[
\lip(f_n)(x)\coloneqq\limsup_{y\to x}\frac{\big|f_n(y)-f_n(x)\big|}{\sfd(y,x)},\quad\text{ if }x\in\X\text{ is an accumulation point,}
\]
and \(\lip(f_n)(x)\coloneqq 0\) otherwise. The \textbf{Sobolev norm} of a function \(f\in H^{1,2}(\X)\) is defined as
\[
\|f\|_{H^{1,2}(\X)}\coloneqq\bigg(\int|f|^2\,\d\mm+\inf_{(f_n)_n}\liminf_{n\to\infty}\int\lip(f_n)^2\,\d\mm\bigg)^{1/2},
\]
where the infimum is taken among all sequences \((f_n)_{n\in\N}\) of boundedly-supported Lipschitz functions converging to \(f\) in \(L^2(\X)\).
The resulting space \(\big(H^{1,2}(\X),\|\cdot\|_{H^{1,2}(\X)}\big)\) is a Banach space. The term \textbf{infinitesimally Hilbertian}, coined by
Gigli in \cite{Gigli12}, is reserved to those metric measure spaces whose Sobolev space is Hilbert.
By analogy, we say that a metric measure space is \textbf{infinitesimally non-Hilbertian} when its Sobolev space is not Hilbert.
\smallskip

We say that \((\X,\sfd,\mm)\) is \textbf{\(C\)-doubling}, for some \(C\geq 1\), provided \(\mm\big(B_{2r}(x)\big)\leq C\mm\big(B_r(x)\big)\)
holds for every \(x\in\X\) and \(r>0\). We say that \((\X,\sfd,\mm)\) is \textbf{\(k\)-Ahlfors regular}, for some \(k\geq 1\), if there
exists \(\alpha\geq 1\) such that \(\alpha^{-1}r^k\leq\mm\big(B_r(x)\big)\leq\alpha r^k\) for every \(x\in\X\) and \(r\in\big(0,{\rm diam}(\X)\big)\),
where \({\rm diam}(\X)\) stands for the diameter of \(\X\). Observe that each Ahlfors regular space is in particular doubling.
Moreover, we say that \((\X,\sfd,\mm)\) supports a \textbf{weak \((1,2)\)-Poincar\'{e} inequality} provided there exist \(C>0\) and \(\lambda\geq 1\)
such that for any boundedly-supported Lipschitz function \(f\colon\X\to\R\) it holds that
\[
\fint_{B_r(x)}\Big|f-{\textstyle\fint_{B_r(x)}f\,\d\mm}\Big|\,\d\mm\leq C r\bigg(\fint_{B_{\lambda r}(x)}\lip(f)^2\,\d\mm\bigg)^{1/2},
\quad\text{ for every }x\in\X\text{ and }r>0.
\]
Finally, by a \textbf{PI space} we mean a doubling space supporting a weak \((1,2)\)-Poincar\'{e} inequality.
For a thorough account of PI spaces, we refer to \cite{HKST15,Bjorn-Bjorn11} and the references therein.
\subsection{Length distances induced by a metric}
Let \(\rho\colon C\to(0,+\infty)\) be a given function, where \(C\subset\R^n\) is any convex set.
Then we denote by \(\sfd_\rho\), or by \(\sfd_\rho^C\), the length distance on \(C\) induced
by the metric \(C\times\R^n\ni(x,v)\mapsto\rho(x)|v|\). Namely, we define
\[
\sfd_\rho(a,b)\coloneqq\inf_\gamma\ell_\rho(\gamma),\quad\text{ for every }a,b\in C,
\text{ where }\ell_\rho(\gamma)\coloneqq\int_0^1\rho(\gamma_t)|\dot\gamma_t|\,\d t,
\]
while the infimum is taken among all Lipschitz curves \(\gamma\colon[0,1]\to C\) with \(\gamma_0=a\) and \(\gamma_1=b\).
Observe that if \(\alpha\leq\rho\leq\beta\) for some \(\beta>\alpha>0\), then
\(\alpha\,\sfd_{\rm Eucl}\leq\sfd_\rho\leq\beta\,\sfd_{\rm Eucl}\) on \(C\times C\).
\begin{lemma}\label{lem:d_rho_iH}
Let \(\rho\colon\R^n\to[\alpha,\beta]\) be given. Suppose \(\rho\) is continuous at \(x\in\R^n\). Then
\begin{equation}\label{eq:d_rho_iH}
\lim_{r\searrow 0}\frac{\sfd_\rho(x+rv,x)}{r}=\rho(x)|v|,\quad\text{ for every }v\in\R^n.
\end{equation}
In particular, if \(\mm\geq 0\) is a Radon measure on \(\R^n\) with \(\mm\ll\mathcal L^n\) and \(\rho\) is \(\mm\)-a.e.\ continuous,
then the metric measure space \((\R^n,\sfd_\rho,\mm)\) is infinitesimally Hilbertian.
\end{lemma}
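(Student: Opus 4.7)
The plan has two parts: first establish the pointwise scaling identity \eqref{eq:d_rho_iH}, and then leverage it to identify slopes and show that the Cheeger energy of $(\R^n,\sfd_\rho,\mm)$ is a quadratic form. Assuming $v\neq 0$, the upper bound in \eqref{eq:d_rho_iH} is immediate: test $\sfd_\rho$ against the straight segment $\gamma_t=x+trv$ to get $\sfd_\rho(x+rv,x)\le r|v|\int_0^1\rho(x+trv)\,\d t$, and use continuity of $\rho$ at $x$ together with dominated convergence (allowed since $\rho\le\beta$) to conclude that the right-hand side equals $r\rho(x)|v|(1+o(1))$. For the lower bound, fix $\eps>0$ and pick $\delta>0$ so small that $\rho\ge\rho(x)-\eps$ on $B_\delta(x)$. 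Any Lipschitz curve $\gamma$ joining $x$ to $x+rv$ either remains in $B_\delta(x)$, in which case the $\rho$-length is at least $(\rho(x)-\eps)$ times the Euclidean length, itself $\ge r|v|$; or else $\gamma$ leaves $B_\delta(x)$ and must return to $x+rv$, so its Euclidean length is at least $2\delta-r|v|$ and consequently $\ell_\rho(\gamma)\ge\alpha(2\delta-r|v|)$. For $r$ small enough this latter quantity strictly exceeds $\rho(x)|v|r$, so only the first regime contributes to the infimum and the bound $\sfd_\rho(x+rv,x)\ge(\rho(x)-\eps)r|v|$ follows. Letting $\eps\searrow 0$ closes \eqref{eq:d_rho_iH}.

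The same argument in fact works along an arbitrary sequence $y\to x$, not merely along a ray, so at each continuity point $x$ of $\rho$ one has $\sfd_\rho(y,x)/|y-x|\to\rho(x)$ as $y\to x$. Combined with the boundedness of difference quotients of a Lipschitz function $f\colon\R^n\to\R$, this yields the pointwise identity
\[
\lip_{\sfd_\rho}(f)(x)=\rho(x)^{-1}\lip_{\sfd_{\rm Eucl}}(f)(x)
\]
at every such point, hence $\mm$-a.e.\ by the standing assumption on $\rho$. Since $\mm\ll\mathcal L^n$, Rademacher's theorem gives $\lip_{\sfd_{\rm Eucl}}(f)=|\nabla f|$ $\mm$-a.e.\ for every Lipschitz $f$, so that
\[
\int\lip_{\sfd_\rho}(f)^2\,\d\mm=\int|\nabla f|^2\rho^{-2}\,\d\mm
\]
for every boundedly-supported Lipschitz $f$. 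The right-hand side is manifestly a quadratic form in $f$, and in particular satisfies the parallelogram identity.

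It remains to transfer the parallelogram law from Lipschitz slopes to the full Cheeger energy. Given $f,g\in H^{1,2}(\R^n,\sfd_\rho,\mm)$, the idea is to pick nearly optimal approximating sequences $(f_n),(g_n)$ of boundedly-supported Lipschitz functions, apply the Euclidean parallelogram identity to $|\nabla(f_n\pm g_n)|^2\rho^{-2}$, and pass to the liminf; standard lower semicontinuity/relaxation arguments for the Cheeger energy (as in \cite{AmbrosioGigliSavare11}) then upgrade this to the parallelogram identity for the Cheeger energy itself, forcing $H^{1,2}(\R^n,\sfd_\rho,\mm)$ to be Hilbert. The subtle point I anticipate is precisely this passage to relaxed slopes, where one must ensure that combining approximants for $f$ and for $g$ remains admissible and that the identity of integrands survives taking the liminf; here the two-sided bound $\alpha\,\sfd_{\rm Eucl}\le\sfd_\rho\le\beta\,\sfd_{\rm Eucl}$ is essential, as it forces the admissible classes of Lipschitz approximants under the two metric structures to coincide and the $L^2(\mm)$-convergence to be equivalent in either geometry.
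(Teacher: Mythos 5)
Your proof is correct, and while the first half follows the paper's argument almost verbatim, the second half takes a genuinely different route. For the identity \eqref{eq:d_rho_iH}, both you and the paper use the straight segment for the upper bound and a confinement argument for the lower bound; the only difference is that the paper confines competitors to the ball \(B_{\delta r|v|}^{|\cdot|}(x)\) with \(\delta>\beta/\alpha\) fixed and the radius scaling with \(r\), whereas you use a fixed ball \(B_\delta(x)\) and discard escaping curves because their \(\rho\)-length stays bounded below by \(\approx 2\alpha\delta\) while the infimum tends to \(0\) (your comparison should really be against the \emph{upper} bound \(\beta r|v|\ge \sfd_\rho(x+rv,x)\) rather than against \(\rho(x)|v|r\), but this is cosmetic). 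For the infinitesimal Hilbertianity, the paper observes that \((\R^n,\sfd_\rho,\mm)\) is \(\mm\)-rectifiable with the single chart \(({\rm id}_{\R^n},\R^n)\), reads off from \eqref{eq:d_rho_iH} that the pointwise norm \(\|\cdot\|_x\) of \eqref{eq:def_ptwse_norm_rect} equals \(\rho(x)|\cdot|\), and invokes Proposition \ref{prop:equiv_iH_rect} (hence the machinery of \cite{IPS21} and \cite{Gigli14}). You instead upgrade \eqref{eq:d_rho_iH} to \(\sfd_\rho(y,x)/|y-x|\to\rho(x)\) along arbitrary \(y\to x\) (which your lower-bound argument does deliver), deduce \(\lip_{\sfd_\rho}(f)=\rho^{-1}|\nabla f|\) \(\mm\)-a.e.\ via Rademacher, and then relax. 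The relaxation step you flag as delicate is in fact standard and closes cleanly: with near-optimal sequences \((f_n)\), \((g_n)\), subadditivity of the liminf together with the pointwise parallelogram identity for \(\rho^{-2}|\nabla\cdot|^2\) gives \({\rm Ch}(f+g)+{\rm Ch}(f-g)\le 2{\rm Ch}(f)+2{\rm Ch}(g)\), and substituting \(f\pm g\) for \(f,g\) and using \(2\)-homogeneity yields the reverse inequality; this is the classical fact that the lower semicontinuous envelope of a quadratic form is a quadratic form. Your route is more self-contained (it bypasses the rectifiability formalism entirely), at the price of having to execute this double-inequality argument explicitly; the paper's route is shorter on the page because Proposition \ref{prop:equiv_iH_rect} is needed elsewhere anyway.
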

\begin{proof}
The identity in \eqref{eq:d_rho_iH} is trivially verified at \(v=0\), so let us assume that \(v\neq 0\). Then:\\
{\color{blue}\(\leq:\)} Fix any \(\varepsilon>0\). Choose some \(\bar r>0\) satisfying \(\rho(x+rv)\leq\rho(x)+\varepsilon\) for every \(r\in(0,\bar r)\).
Calling \(\gamma^r\colon[0,1]\to\R^n\) the constant-speed parametrisation of the interval \([x,x+rv]\), one has
\[
\limsup_{r\searrow 0}\frac{\sfd_\rho(x+rv,x)}{r}\leq\limsup_{r\searrow 0}\frac{\ell_\rho(\gamma^r)}{r}
=\limsup_{r\searrow 0}\int_0^1\rho(x+rsv)|v|\,\d s\leq\big(\rho(x)+\varepsilon\big)|v|,
\]
whence it follows (by letting \(\varepsilon\searrow 0\)) that \(\limsup_{r\searrow 0}\sfd_\rho(x+rv,x)/r\leq\rho(x)|v|\).\\
{\color{blue}\(\geq:\)} Fix any \(\delta>\beta/\alpha\). For any \(r>0\) we have that \(\sfd_\rho(x+rv,x)\leq\beta r|v|\), while any Lipschitz curve
\(\gamma\colon[0,1]\to\R^2\) with \(\gamma_0=x\) that intersects \(\R^2\setminus B_{\delta r|v|}^{|\cdot|}(x)\) satisfies \(\ell_\rho(\gamma)\geq\alpha\delta r|v|\). Then
\begin{equation}\label{eq:d_rho_iH_aux}
\sfd_\rho(x+rv,x)=\inf\Big\{\ell_\rho(\gamma)\;\Big|\;\gamma\colon[0,1]\to B_{\delta r|v|}^{|\cdot|}(x)\text{ Lipschitz},\,\gamma_0=x,\,\gamma_1=x+rv\Big\}.
\end{equation}
Now fix any \(\varepsilon>0\). Choose some \(\bar r>0\) satisfying \(\rho(y)\geq\rho(x)-\varepsilon\) for every \(y\in B_{\delta\bar r|v|}^{|\cdot|}(x)\).
Hence, given \(r\in(0,\bar r)\) and \(\gamma\colon[0,1]\to B_{\delta r|v|}^{|\cdot|}(x)\) Lipschitz with \((\gamma_0,\gamma_1)=(x,x+rv)\), one has
\[
\ell_\rho(\gamma)=\int_0^1\rho(\gamma_t)|\dot\gamma_t|\,\d t\geq\big(\rho(x)-\varepsilon\big)\int_0^1|\dot\gamma_t|\,\d t=\big(\rho(x)-\varepsilon\big)r|v|.
\]
By recalling \eqref{eq:d_rho_iH_aux}, we can conclude that \(\liminf_{r\searrow 0}\sfd_\rho(x+rv,x)/r\geq\big(\rho(x)-\varepsilon\big)|v|\) and thus accordingly
that \(\liminf_{r\searrow 0}\sfd_\rho(x+rv,x)/r\geq\rho(x)|v|\), thanks to the arbitrariness of \(\varepsilon>0\).

All in all, the identity in \eqref{eq:d_rho_iH} is proved. Finally, let us pass to the verification of the last part of the statement. Suppose that \(\mm\) is a Radon
measure on \(\R^n\) with \(\mm\ll\mathcal L^n\) and that \(\rho\) is continuous at \(\mm\)-a.e.\ point of \(\R^n\). In particular, the space \((\R^n,\sfd_\rho,\mm)\)
is \(\mm\)-rectifiable and admits \(\big\{(\R^n,{\rm id}_{\R^n})\big\}\) as an atlas. Consequently, \eqref{eq:d_rho_iH} gives \(\|\cdot\|_x=\rho(x)|\cdot|\)
for \(\mm\)-a.e.\ \(x\in\R^n\), so that \((\R^n,\sfd_\rho,\mm)\) is infinitesimally Hilbertian (cf.\ \eqref{eq:def_ptwse_norm_rect} and Proposition
\ref{prop:equiv_iH_rect}), as desired.
\end{proof}
It is worth pointing out that the absolute continuity assumption in the last part of the statement of Lemma \ref{lem:d_rho_iH} might be dropped.
However, the present formulation of Lemma \ref{lem:d_rho_iH} is easier to achieve and sufficient for our purposes.
\subsection{Tangent cones}
In this paper we are concerned with \textbf{tangent cones}, considered with respect to the \textbf{pointed measured Gromov--Hausdorff} topology,
for whose definition we refer to \cite[Definition 3.24]{GigliMondinoSavare13}. By a \textbf{pointed metric measure space} \((\X,\sfd,\mm,x)\)
we mean a metric measure space \((\X,\sfd,\mm)\), together with a reference point \(x\in{\rm spt}(\mm)\), where \({\rm spt}(\mm)\subset\X\)
stands for the support of the measure \(\mm\). Given any radius \(r>0\), we denote by
\[
\mm^r_x\coloneqq\frac{\mm}{\mm\big(B_r(x)\big)}
\]
the \textbf{normalised measure} of scale \(r\) around \(x\).
\begin{definition}[Tangent cone]\label{def:tan_cone}
Let \((\X,\sfd,\mm,p)\) be a pointed metric measure space. Then we say that a given pointed metric measure space \((\Y,\sfd_\Y,\mm_\Y,q)\)
belongs to the \textbf{pmGH-tangent cone} \(\Tan_p(\X,\sfd,\mm)\) to \((\X,\sfd,\mm)\) at \(p\) provided there exists a sequence of radii \(r_k\searrow 0\) such that
\[
(\X,\sfd/r_k,\mm^{r_k}_p,p)\to(\Y,\sfd_\Y,\mm_\Y,q),\quad\text{ in the pointed measured Gromov--Hausdorff sense.}
\]
Namely, for every \(\varepsilon\in(0,1)\) and \(\mathcal L^1\)-a.e.\ \(R>1\), there exist \(\bar k\in\N\) and a sequence \((\psi^k)_{k\geq\bar k}\)
of Borel mappings \(\psi^k\colon B_{R r_k}(p)\to\Y\) such that the following properties are verified:
\begin{itemize}
\item[\(\rm i)\)] \(\psi^k(p)=q\),
\item[\(\rm ii)\)] \(\big|\sfd(x,y)-r_k\,\sfd_\Y\big(\psi^k(x),\psi^k(y)\big)\big|\leq\varepsilon r_k\) holds for every \(x,y\in B_{R r_k}(p)\),
\item[\(\rm iii)\)] \(B_{R-\varepsilon}(q)\) is contained in the open \(\varepsilon\)-neighbourhood of \(\psi^k\big(B_{R r_k}(p)\big)\),
\item[\(\rm iv)\)] \(\mm\big(B_{r_k}(p)\big)^{-1}\psi^k_\#\big(\mm|_{B_{R r_k}(p)}\big)\rightharpoonup\mm_\Y|_{B_R(q)}\) as \(k\to\infty\)
in duality with the space of bounded continuous functions \(f\colon\Y\to\R\) having bounded support.
\end{itemize}
\end{definition}

When we say that \(\Tan_p(\X,\sfd,\mm)\) contains a \textbf{unique} element, we mean that all its elements are isomorphic to each other in the following sense:
two given pointed metric measure spaces \((\Y_1,\sfd_{\Y_1},\mm_{\Y_1},q_1)\), \((\Y_2,\sfd_{\Y_2},\mm_{\Y_2},q_2)\) are said to be \textbf{isomorphic}
provided there exists an isometric bijection \(i\colon\Y_1\to\Y_2\) such that \(i(q_1)=q_2\) and \(i_\#\mm_{\Y_1}=\mm_{\Y_2}\).
This notion of isomorphism of pointed metric measure spaces is quite unnatural, as one would like to require that \(i\) is an isometric
bijection only between the supports of \(\mm_{\Y_1}\) and \(\mm_{\Y_2}\), but in general this is not allowed when working with the pointed measured
Gromov--Hausdorff topology, where `the whole space matters'. Nevertheless, this is not really an issue when (as in the present paper) only fully-supported
measures are taken into consideration.
\begin{remark}\label{rmk:equiv_tan_cone}{\rm
As proved in \cite[Proposition 3.28]{GigliMondinoSavare13}, a given pointed metric measure space \((\Y,\sfd_\Y,\mm_\Y,q)\) belongs to \(\Tan_p(\X,\sfd,\mm)\) 
if and only if there exist \(r_k\searrow 0\), \(R_k\nearrow\infty\), \(\varepsilon_k\searrow 0\), and Borel mappings \(\psi^k\colon B_{R_k r_k}(p)\to\Y\) such
that the following properties are verified:
\begin{itemize}
\item[\(\rm i')\)] \(\psi^k(p)=q\),
\item[\(\rm ii')\)] \(\big|\sfd(x,y)-r_k\,\sfd_\Y\big(\psi^k(x),\psi^k(y)\big)\big|\leq\varepsilon_k r_k\) holds for every \(x,y\in B_{R_k r_k}(p)\),
\item[\(\rm iii')\)] \(B_{R_k-\varepsilon_k}(q)\) is contained in the open \(\varepsilon_k\)-neighbourhood of \(\psi^k\big(B_{R_k r_k}(p)\big)\),
\item[\(\rm iv')\)] \(\mm\big(B_{r_k}(p)\big)^{-1}\psi^k_\#\big(\mm|_{B_{R_k r_k}(p)}\big)\rightharpoonup\mm_\Y\) as \(k\to\infty\)
in duality with the space of bounded continuous functions \(f\colon\Y\to\R\) having bounded support.
\fr
\end{itemize}
}\end{remark}

Notice that if \((\X,\sfd,\mm)\) is \(C\)-doubling, then \((\X,\sfd/r,\mm^r_x,x)\) is \(C\)-doubling for every \(x\in\X\) and \(r>0\).
Thanks to this observation, we deduce that, by combining \cite[Proposition 3.33]{GigliMondinoSavare13} with \cite[Proposition 6.3]{GP16},
one can readily obtain the following result:
\begin{lemma}\label{lem:local_tan}
Let \((\X,\sfd,\mm)\) be a doubling metric measure space and \(E\subset\X\) a Borel set. Then
\[
\Tan_x(\X,\sfd,\mm)=\Tan_x\big(E,\sfd|_{E\times E},\mm|_E\big),\quad\text{ for }\mm\text{-a.e.\ }x\in E.
\]
\end{lemma}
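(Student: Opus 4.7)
The plan is to identify a full-measure subset $E'\subset E$ where the infinitesimal metric-measure structures of $E$ and $\X$ coincide, and then construct $\varepsilon$-isometries relating the two families of rescalings in a direct way. For the first step, since $(\X,\sfd,\mm)$ is doubling, the Lebesgue differentiation theorem (applied to $\chi_E$) gives
\[
\lim_{r\searrow 0}\frac{\mm\big(B_r(x)\cap E\big)}{\mm\big(B_r(x)\big)}=1,\qquad\text{for }\mm\text{-a.e.\ }x\in E,
\]
which is exactly the density content packaged by \cite[Proposition 6.3]{GP16}. Let $E'$ be the set of such density points.

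The second step is to upgrade measure density to metric density: for every $x\in E'$ and every $\varepsilon>0$ there exists $r_0>0$ such that for all $r\in(0,r_0)$ and all $y\in B_r(x)$ one has $\sfd(y,E)<\varepsilon r$. Indeed, otherwise $B_{\varepsilon r}(y)\subset B_{(1+\varepsilon)r}(x)\setminus E$; by doubling, $\mm\bigl(B_{\varepsilon r}(y)\bigr)$ is comparable to $\mm\bigl(B_r(x)\bigr)$ with a constant depending only on $\varepsilon$ and $C$, contradicting the vanishing of $\mm\bigl(B_r(x)\setminus E\bigr)/\mm\bigl(B_r(x)\bigr)$. This is essentially \cite[Proposition 3.33]{GigliMondinoSavare13}.

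Having secured these two facts, I would verify the equality of tangent cones at each $x\in E'$ via Definition \ref{def:tan_cone} (or, more comfortably, its asymptotic reformulation in Remark \ref{rmk:equiv_tan_cone}). For the inclusion $\Tan_x(E,\sfd|_{E\times E},\mm|_E)\subset\Tan_x(\X,\sfd,\mm)$, given maps $\psi^k\colon B^E_{Rr_k}(x)\to\Y$ realizing the convergence, I would extend them to $\tilde\psi^k\colon B^{\X}_{Rr_k}(x)\to\Y$ by composing with any Borel selection $\pi^k\colon B^{\X}_{Rr_k}(x)\to E$ satisfying $\sfd(y,\pi^k(y))\leq 2\,\sfd(y,E)$; properties (i)--(iii) are preserved with an error controlled by the metric density from the second step. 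For the reverse inclusion, the restrictions $\psi^k|_{B^E_{Rr_k}(x)}$ directly satisfy (i)--(ii), and (iii) follows again from the metric density. Property (iv) is preserved in both directions because $\mm\bigl(B_{r_k}(x)\bigr)$ and $\mm\bigl(B_{r_k}(x)\cap E\bigr)$ differ by a factor tending to $1$, so the two normalized pushforward measures are asymptotically equal when tested against bounded continuous functions with bounded support.

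The main obstacle is the bookkeeping in (iv): passing between $\mm$ and $\mm|_E$ introduces discrepancies in the normalization and in the region where mass is being measured. These are controlled by combining the measure density of $E$ at $x$ with the uniform doubling constant, which ensures that the mass of $\mm\bigl(B_{Rr_k}(x)\bigr)-\mm\bigl(B_{Rr_k}(x)\cap E\bigr)$, once normalized by $\mm\bigl(B_{r_k}(x)\bigr)$, vanishes as $k\to\infty$ for each fixed $R$.
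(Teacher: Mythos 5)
Your argument is correct and is essentially the paper's own: the paper disposes of this lemma by citing \cite[Proposition 3.33]{GigliMondinoSavare13} and \cite[Proposition 6.3]{GP16}, whose combined content is exactly what you reconstruct by hand, namely that \(\mm\)-a.e.\ \(x\in E\) is a Lebesgue density point of \(E\), that by doubling this upgrades to \(\sup_{y\in B_r(x)}\sfd(y,E)=o(r)\), and that these two facts let one pass the \(\varepsilon\)-isometries of Definition \ref{def:tan_cone} back and forth between \(\X\) and \(E\). The remaining bookkeeping you flag (taking a slightly larger radius so that the almost-nearest-point selection \(\pi^k\) lands in the domain of \(\psi^k\), noting \(\pi^k|_E={\rm id}\) so the pushforwards in \(\rm iv)\) differ only by the normalising factors and by mass of total measure \(\mm\big(B_{Rr_k}(x)\setminus E\big)/\mm\big(B_{r_k}(x)\big)\to 0\)) is routine and works as you indicate.
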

\subsection{Metric differential}
Let us briefly recall the concept of \textbf{metric differential}, introduced by Kirchheim in \cite{Kirchheim94}.
Let \((\X,\sfd)\) be a metric space, \(E\subset\R^n\) a Borel set, and \(f\colon E\to\X\) a Lipschitz map.
Being \(f(E)\) separable, we can find an isometric embedding \(\iota\colon f(E)\to\ell^\infty\). Fix any Lipschitz
extension \(\bar f\colon\R^n\to\ell^\infty\) of \(\iota\circ f\colon E\to\ell^\infty\). Then for \(\mathcal L^n\)-a.e.\ \(x\in E\) the limit
\[
{\rm md}_x(f)(v)\coloneqq\lim_{r\searrow 0}\frac{\big\|\bar f(x+rv)-\bar f(x)\big\|_{\ell^\infty}}{r}
\]
exists and is finite for every \(v\in\R^n\). Moreover, the resulting function \({\rm md}_x(f)\colon\R^n\to[0,+\infty)\) is a seminorm on \(\R^n\),
and is independent of the chosen extension \(\bar f\), for \(\mathcal L^n\)-a.e.\ point \(x\in E\). When \(f\) is biLipschitz with its image,
\({\rm md}_x(f)\) is a norm for \(\mathcal L^n\)-a.e.\ \(x\in E\). One also has that 
\begin{equation}\label{eq:md_prop_pre}
\lim_{\R^n\ni y\to x}\frac{\big\|\bar f(y)-\bar f(x)\big\|_{\ell^\infty}-{\rm md}_x(f)(y-x)}{|y-x|}=0,\quad\text{ for }\mathcal L^n\text{-a.e.\ }x\in E,
\end{equation}
as proved in \cite[Theorem 2]{Kirchheim94}. We will actually need a consequence of \eqref{eq:md_prop}, which we are going to discuss below. Before passing
to its statement, we fix some additional terminology.
\smallskip

The set \({\sf sn}_n\) of all seminorms on \(\R^n\) is a complete, separable metric space if endowed with the distance \({\sf D}_n\), which is given by
\[
{\sf D}_n({\sf n}_1,{\sf n}_2)\coloneqq\underset{\substack{v\in\R^n:\\|v|\leq 1}}\sup\big|{\sf n}_1(v)-{\sf n}_2(v)\big|,
\quad\text{ for every }{\sf n}_1,{\sf n}_2\in{\sf sn}_n.
\]
Then \(E\ni x\mapsto{\rm md}_x(f)\in{\sf sn}_n\) is Borel measurable, as it was pointed out in \cite[Theorem 3.1]{GigliTyulenev21}.
\begin{lemma}\label{lem:md_prop}
Let \((\X,\sfd)\) be a metric space. Let \(f\colon E\to\X\) be a Lipschitz map, for some Borel set \(E\subset\R^n\). Then there exists a partition
\((K_j)_{j\in\N}\) of \(E\) (up to \(\mathcal L^n\)-null sets) into compact sets with the following property: given any \(j\in\N\), it holds that
\begin{equation}\label{eq:md_prop}
\lim_{K_j\ni y,z\to x}\frac{\sfd\big(f(y),f(z)\big)-{\rm md}_x(f)(y-z)}{|y-z|}=0,\quad\text{ for }\mathcal L^n\text{-a.e.\ }x\in K_j.
\end{equation}
\end{lemma}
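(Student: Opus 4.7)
The plan is to upgrade the single-point expansion \eqref{eq:md_prop_pre} at $x$, where only $y$ varies, to a joint two-point expansion in which both $y$ and $z$ approach $x$ inside $K_j$. The gain comes from making the error in \eqref{eq:md_prop_pre} uniform and the assignment $x \mapsto {\rm md}_x(f)$ continuous on compact pieces via Egorov and Lusin. As a first reformulation, since $\bar f|_E = \iota \circ f$ with $\iota$ isometric, the quantity $\|\bar f(y)-\bar f(x)\|_{\ell^\infty}$ coincides with $\sfd(f(y),f(x))$ for all $y\in E$. Thus, for $\mathcal L^n$-a.e. $x\in E$, the modulus
\[
\omega_x(\delta)\coloneqq\sup_{\substack{y\in E\\ 0<|y-x|\leq\delta}}\frac{\bigl|\sfd(f(y),f(x))-{\rm md}_x(f)(y-x)\bigr|}{|y-x|}
\]
decreases to $0$ as $\delta\searrow 0$. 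By continuity in $y\in E\setminus\{x\}$, the supremum coincides with that over a fixed countable dense subset $D\subset E$, so $x\mapsto\omega_x(\delta)$ is Borel for every $\delta>0$.

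Next, I would build the partition. The map $E\ni x\mapsto{\rm md}_x(f)\in({\sf sn}_n,{\sf D}_n)$ is Borel, so Lusin's theorem yields compact subsets on which it is continuous, while Egorov's theorem, applied to the monotone sequence $\omega_x(1/k)\searrow 0$, yields compact subsets on which the convergence is uniform in $x$. Intersecting the two families, and exhausting $E$ up to an $\mathcal L^n$-null set by iterating the construction on the residual (using inner regularity of $\mathcal L^n$, and restricting to $E\cap B_k(0)$ to reduce to the finite-measure case if needed), I obtain a disjoint partition $(K_j)_{j\in\N}$ of $E$ into compact sets such that, on each $K_j$:
\begin{itemize}
\item[(a)] $x\mapsto{\rm md}_x(f)$ is continuous from $K_j$ to $({\sf sn}_n,{\sf D}_n)$;
\item[(b)] $\omega_x(\delta)\to 0$ uniformly in $x\in K_j$ as $\delta\searrow 0$.
\end{itemize}

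Finally, I verify \eqref{eq:md_prop}. Fix $j\in\N$ and take $x\in K_j$ at which \eqref{eq:md_prop_pre} holds (a full-measure subset of $K_j$). For $y,z\in K_j$ with $y\neq z$ I use the decomposition
\[
\sfd(f(y),f(z))-{\rm md}_x(f)(y-z)=\bigl[\sfd(f(y),f(z))-{\rm md}_y(f)(z-y)\bigr]+\bigl[{\rm md}_y(f)(y-z)-{\rm md}_x(f)(y-z)\bigr],
\]
where I have used that the seminorm ${\rm md}_y(f)$ is even to rewrite ${\rm md}_y(f)(z-y)={\rm md}_y(f)(y-z)$. The first bracket is bounded in modulus by $\omega_y(|z-y|)\,|z-y|$ by the very definition of $\omega_y$ (applied at the point $y\in K_j$ with the allowed displacement $z\in E$), and by (b) this is $o(|y-z|)$ as $K_j\ni y\to x$ with $|z-y|\to 0$. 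The second bracket is bounded by ${\sf D}_n\bigl({\rm md}_y(f),{\rm md}_x(f)\bigr)\,|y-z|$, which is $o(|y-z|)$ by (a), since $K_j\ni y\to x$. Dividing by $|y-z|$ and letting $K_j\ni y,z\to x$ then yields \eqref{eq:md_prop}. The only delicate point of the argument is the simultaneous application of Lusin and Egorov to produce the compact pieces carrying both (a) and (b); once they are in hand, the rest is just the triangle inequality together with the evenness of seminorms, which is essential to telescope the middle term cleanly.
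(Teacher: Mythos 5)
Your proposal is correct and follows essentially the same route as the paper's proof: both make the error in \eqref{eq:md_prop_pre} uniform via Egorov and the map \(x\mapsto{\rm md}_x(f)\) continuous via Lusin on a compact exhaustion, and then conclude with the same two-term decomposition \(\big|\sfd(f(y),f(z))-{\rm md}_x(f)(y-z)\big|\leq\big|\sfd(f(y),f(z))-{\rm md}_y(f)(y-z)\big|+{\sf D}_n\big({\rm md}_y(f),{\rm md}_x(f)\big)|y-z|\). The only differences are cosmetic (your modulus \(\omega_x(\delta)\) versus the paper's \(\phi_i\), and your explicit remarks on measurability and on the evenness of seminorms).
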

\begin{proof}
The property \eqref{eq:md_prop_pre} can be equivalently rephrased by saying that \(\phi_i\searrow 0\) holds \(\mathcal L^n\)-a.e.\ on \(E\) as \(i\to\infty\),
where for every \(i\in\N\) we define
\[
\phi_i(x)\coloneqq\sup_{y\in B_{1/i}^{|\cdot|}(x)\setminus\{x\}}\frac{\Big|\big\|\bar f(y)-\bar f(x)\big\|_{\ell^\infty}-{\rm md}_x(f)(y-x)\Big|}{|y-x|},
\quad\text{ for }\mathcal L^n\text{-a.e.\ }x\in E.
\]
By applying Lusin Theorem to \(E\ni x\mapsto{\rm md}_x(f)\in{\sf sn}_n\) and Egorov Theorem to \((\phi_i)_{i\in\N}\), we obtain a sequence
\((K_j)_{j\in\N}\) of pairwise disjoint, compact subsets of \(E\) with \(\mathcal L^n\big(E\setminus\bigcup_{j\in\N}K_j\big)=0\) such that
\(K_j\ni x\mapsto{\rm md}_x(f)\) is continuous and \(\phi_i|_{K_j}\rightrightarrows 0\) uniformly as \(i\to\infty\) for any \(j\in\N\).
Therefore, given any \(j\in\N\), \(x\in K_j\), and \(\varepsilon>0\), we can find an index \(i\in\N\) such that \(\phi_i(y)\leq\varepsilon\)
and \({\sf D}_n\big({\rm md}_y(f),{\rm md}_x(f)\big)\leq\varepsilon\) for every \(y\in B_{1/i}^{|\cdot|}(x)\cap K_j\), whence it follows that
\[\begin{split}
\frac{\big|\sfd\big(f(y),f(z)\big)-{\rm md}_x(f)(y-z)\big|}{|y-z|}&\leq\phi_i(y)+{\sf D}_n\big({\rm md}_y(f),{\rm md}_x(f)\big)\leq 2\varepsilon
\end{split}\]
holds for every \(y,z\in B_{1/(2i)}^{|\cdot|}(x)\cap K_j\) with \(y\neq z\). This gives \eqref{eq:md_prop}, as desired.
\end{proof}
\subsection{Essentially rectifiable spaces}
Let \((\X,\sfd,\mm)\) be a metric measure space. Then we say that a couple \((U,\varphi)\) is an \textbf{\(n\)-chart} on \((\X,\sfd,\mm)\), for some \(n\in\N\),
provided \(U\subset\X\) is a Borel set such that \(\mm|_U\ll\mathcal H^n\) and \(\varphi\colon U\to\R^n\) is a mapping which is biLipschitz with its image.
Following \cite{GP16,IPS21}, we say that \((\X,\sfd,\mm)\) is \textbf{\(\mm\)-rectifiable} provided it admits an \textbf{atlas}, \emph{i.e.}, a countable family
\(\mathscr A=\big\{(U_i,\varphi_i)\big\}_{i\in\N}\) of \(n_i\)-charts \(\varphi_i\colon U_i\to\R^{n_i}\) on \((\X,\sfd,\mm)\) (for some \(n_i\in\N\))
such that \(\{U_i\}_{i\in\N}\) is a Borel partition of \(\X\) up to \(\mm\)-null sets. Notice that we do not assume that \(\sup_{i\in\N}n_i<+\infty\).
We define \(n\colon\X\to\N\) as \(n(x)\coloneqq 0\) for every \(x\in\X\setminus\bigcup_{i\in\N}U_i\) and
\[
n(x)\coloneqq n_i,\quad\text{ for every }i\in\N\text{ and }x\in U_i.
\]
It can be readily checked that the function \(n\) is \(\mm\)-a.e.\ independent of the chosen atlas \(\mathscr A\).

Given any \(i\in\N\) and \(\mm\)-a.e.\ \(x\in U_i\), we define the norm \(\|\cdot\|_x\colon\R^{n(x)}\to[0,+\infty)\) on \(\R^{n(x)}\) as
\begin{equation}\label{eq:def_ptwse_norm_rect}
\|v\|_x\coloneqq{\rm md}_{\varphi_i(x)}(\varphi_i^{-1})(v),\quad\text{ for every }v\in\R^{n(x)}.
\end{equation}
The fact that \((\varphi_i)_\#(\mm|_{U_i})\ll\mathcal L^{n(x)}\) ensures that \(\|\cdot\|_x\) is \(\mm\)-a.e.\ independent of the atlas \(\mathscr A\).

We denote by \(\mathcal H^{n(x)}_x\) the \(n(x)\)-dimensional Hausdorff measure on \(\big(\R^{n(x)},\|\cdot\|_x\big)\) and by
\[
\underline{\mathcal H}^{n(x)}_x\coloneqq\frac{\mathcal H^{n(x)}_x}{\mathcal H^{n(x)}_x\big(B_1^{\|\cdot\|_x}(0)\big)}
\]
its normalisation. Moreover, for any \(i\in\N\) we can find a Borel function \(\theta_i\colon U_i\to[0,+\infty)\) such that
\(\mm|_{U_i}=\theta_i\mathcal H^{n_i}_\sfd|_{U_i}\). We define the density function \(\theta\colon\X\to[0,+\infty)\) as
\(\theta\coloneqq\sum_{i\in\N}\nchi_{U_i}\theta_i\).

Lemma \ref{lem:md_prop} implies that, up to refining the atlas \(\mathscr A\), it is not restrictive to assume that
\begin{equation}\label{eq:norm_x_prop}
\lim_{U_i\ni y,z\to x}\frac{\big|\sfd(y,z)-\|\varphi_i(y)-\varphi_i(z)\|_x\big|}{\sfd(y,z)}=0,
\quad\text{ for every }i\in\N\text{ and }\mm\text{-a.e.\ }x\in U_i.
\end{equation}
\section{Proof of Theorem \ref{thm:generic_tan_PI_rect}}\label{s:proof_thm3}
Theorem \ref{thm:generic_tan_PI_rect} is a consequence of the following two results, of independent interest.
\begin{proposition}\label{prop:equiv_iH_rect}
Let \((\X,\sfd,\mm)\) be an \(\mm\)-rectifiable space. If \(\|\cdot\|_x\) is a Hilbert norm on \(\R^{n(x)}\) for \(\mm\)-a.e.\ point \(x\in\X\),
then \((\X,\sfd,\mm)\) is infinitesimally Hilbertian. In the case where \((\X,\sfd,\mm)\) is also a PI space, the converse implication is
verified as well.
\end{proposition}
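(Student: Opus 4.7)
The plan is to bridge between fiberwise Hilbertianity of the norms \(\|\cdot\|_x\) and global Hilbertianity of \(H^{1,2}(\X)\) via the chart-wise almost isometry \eqref{eq:norm_x_prop}. The enabling technical tool I would establish first is the following slope identification: along an atlas verifying \eqref{eq:norm_x_prop}, every boundedly-supported Lipschitz function \(f\colon\X\to\R\) satisfies
\[
\lip(f)(x)=\|d_{\varphi_i(x)}(f\circ\varphi_i^{-1})\|_x^{*}\qquad\text{for }\mm\text{-a.e.\ }x\in U_i,
\]
where \(d_y(f\circ\varphi_i^{-1})\) is the Euclidean differential (defined \(\mathcal L^{n_i}\)-a.e.\ by Rademacher, hence \(\mm\)-a.e.\ since \((\varphi_i)_{\#}\mm|_{U_i}\ll\mathcal L^{n_i}\)) and \(\|\cdot\|_x^{*}\) is the dual norm of \(\|\cdot\|_x\) on \(\R^{n_i}\).

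\textbf{Forward direction.} If \(\|\cdot\|_x\) (equivalently \(\|\cdot\|_x^{*}\)) is Hilbert \(\mm\)-a.e., then pointwise on each chart the quantity \(\lip(f)^2(x)\) is a Hilbert-norm-squared applied to the linear map \(f\mapsto d(f\circ\varphi_i^{-1})\), so the functional \(Q(f):=\int\lip(f)^2\,\d\mm\) is a non-negative quadratic form on the space of boundedly-supported Lipschitz functions. The Sobolev seminorm squared \(\|f\|_{H^{1,2}}^2-\|f\|_{L^2}^2\) equals by definition the \(L^2\)-lower-semicontinuous envelope \(\bar Q\) of \(Q\), and I would verify that the lsc envelope of a quadratic form in \(L^2\) is still a quadratic form: given \(L^2\)-recovery sequences \(f_n\to f\) and \(g_n\to g\) with \(Q(f_n)\to\bar Q(f)\) and \(Q(g_n)\to\bar Q(g)\), passing to \(\liminf\) in \(Q(f_n+g_n)+Q(f_n-g_n)=2Q(f_n)+2Q(g_n)\) yields \(\bar Q(f+g)+\bar Q(f-g)\leq 2\bar Q(f)+2\bar Q(g)\); the opposite inequality is obtained by applying the same scheme to recovery sequences for \(f\pm g\) together with the identity \(Q((h+k)/2)+Q((h-k)/2)=\tfrac12(Q(h)+Q(k))\). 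Hence \(\bar Q\), and therefore the Sobolev seminorm, obey the parallelogram law, so \(H^{1,2}(\X)\) is Hilbert.

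\textbf{Converse direction (PI case).} Suppose now that \((\X,\sfd,\mm)\) is PI and \(H^{1,2}(\X)\) is Hilbert. Cheeger's theorem on PI spaces gives that the minimal weak upper gradient of a boundedly-supported Lipschitz \(f\) is \(\lip(f)\) \(\mm\)-a.e., so the Cheeger energy reads \(\frac12\int\lip(f)^2\,\d\mm\). Combined with Hilbertianity of \(H^{1,2}\), this yields, via the standard carr\'e-du-champ construction (equivalently Gigli's cotangent-module framework for infinitesimally Hilbertian spaces), a symmetric bilinear form \(\Gamma\) on Lipschitz functions, pointwise defined \(\mm\)-a.e., such that \(\Gamma(f,f)(x)=\lip(f)^2(x)\). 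Fix a chart \((U_i,\varphi_i)\) and, for \(a\in\R^{n_i}\), let \(f_a\colon\X\to\R\) be a Lipschitz extension of \(a\cdot\varphi_i|_{U_i}\) depending linearly on \(a\) (obtained by McShane-extending each coordinate of \(\varphi_i\) once and for all). The slope identification gives \(\lip(f_a)(x)=\|a\|_x^{*}\) for \(\mm\)-a.e.\ \(x\in U_i\), whence \(\Gamma(f_a,f_a)(x)=(\|a\|_x^{*})^2\). Bilinearity of \(\Gamma\) and linearity of \(a\mapsto f_a|_{U_i}\) translate into the pointwise parallelogram identity
\[
(\|a+b\|_x^{*})^2+(\|a-b\|_x^{*})^2=2(\|a\|_x^{*})^2+2(\|b\|_x^{*})^2
\]
simultaneously for all pairs \((a,b)\) in a fixed countable dense subset of \(\R^{n_i}\times\R^{n_i}\) and \(\mm\)-a.e.\ \(x\in U_i\); continuity of \(\|\cdot\|_x^{*}\) extends the identity to all \(a,b\in\R^{n_i}\), so \(\|\cdot\|_x^{*}\) and hence \(\|\cdot\|_x\) is Hilbert \(\mm\)-a.e.

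\textbf{Main obstacle.} The most delicate step is the slope identification above. The inequality \((\geq)\) is immediate by restricting the defining \(\limsup\) to \(y\in U_i\) and invoking \eqref{eq:norm_x_prop} at \(\mathcal L^{n_i}\)-density-one points of \(\varphi_i(U_i)\). The inequality \((\leq)\) requires controlling the difference quotient along sequences \(y\to x\) with \(y\notin U_i\), which a priori might inflate \(\lip(f)\) above the chart value. In the forward direction one can sidestep this by passing to the minimal weak upper gradient of \(f\) (which is what really enters the Cheeger energy) and showing, by a standard biLipschitz-curve argument on the chart \(\varphi_i\), that it equals \(\|d_{\varphi_i(x)}(f\circ\varphi_i^{-1})\|_x^{*}\) \(\mm\)-a.e.\ on \(U_i\); in the PI case Cheeger's theorem identifies the slope with the minimal weak upper gradient for free. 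Once the slope (or upper gradient) formula is in hand, the remainder of the argument is a soft manipulation of quadratic forms and of the polarization of the Dirichlet form.
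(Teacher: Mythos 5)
The paper proves this proposition purely by citation (to \cite{IPS21}, \cite{Gigli14}, \cite{Cheeger00}), so your attempt at a self-contained argument is a genuinely different route in presentation, and its overall skeleton --- identify the energy density with the dual norm \(\|\cdot\|_x^{*}\) applied to the chart differential, observe that the \(L^2\)-lower semicontinuous envelope of a quadratic form is quadratic, and polarise the resulting Dirichlet form to recover the pointwise parallelogram law for the converse --- is the correct one and mirrors what the cited references actually establish. The relaxation argument and the carr\'e-du-champ polarisation are both sound as stated.

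The gap is in the ``slope identification'', and although you correctly flag it as the delicate step, your proposed repair does not work. For a general \(\mm\)-rectifiable space neither \(\lip(f)(x)=\|d_{\varphi_i(x)}(f\circ\varphi_i^{-1})\|_x^{*}\) nor its surrogate with the minimal weak upper gradient in place of \(\lip(f)\) is true; only one-sided inequalities are available. The paper's own example for Theorem \ref{thm:example_non_PI} is a counterexample to the equality you need: on \(C\times C\) there are no non-constant rectifiable curves, so every minimal weak upper gradient vanishes and the relaxed energy is identically zero, while \(\|d(f\circ\varphi^{-1})\|_x^{*}\) is generically positive. Consequently the ``standard biLipschitz-curve argument'' you invoke for the lower bound has no curves to test against and cannot close the gap, and without a pointwise equality the functional \(f\mapsto\int|Df|_w^2\,\d\mm\) is not visibly a quadratic form, so the forward direction stalls. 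What is actually needed --- and what \cite[Lemma 4.1, Theorem 1.2]{IPS21} provides --- is the weaker, module-theoretic statement that Gigli's cotangent module is, fiberwise and isometrically, a quotient (equivalently, the tangent module a submodule) of the concrete module with fibers built from \(\|\cdot\|_x\); since quotients and closed subspaces of Hilbert spaces are again Hilbert, this yields the forward implication via \cite[Proposition 2.3.17]{Gigli14} without ever computing the minimal weak upper gradient explicitly. Your converse direction is essentially right in outline, because on a PI space Cheeger's results together with \cite[Theorem 1.3]{IPS21} do upgrade the quotient to an isomorphism, so that \(\lip(f_a)=\|a\|_x^{*}\) holds \(\mm\)-a.e.\ on \(U_i\) there; but that identification is itself the non-trivial input rather than a consequence of a soft curve argument, so the converse as written also presupposes the very fact it should be deducing.
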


\begin{proof}
The first part of the statement follows from \cite[Lemma 4.1]{IPS21}, \cite[Theorem 1.2]{IPS21}, and \cite[Proposition 2.3.17]{Gigli14},
whereas the last part can be obtained by taking also \cite[Theorem 1.3]{IPS21} and the results of \cite{Cheeger00} into account.
Alternatively, the last part of the statement can be deduced from \cite[Corollary 6.7]{ErikssonBiqueSoultanis21}.
\end{proof}
\begin{proposition}\label{prop:tan_to_rect}
Let \((\X,\sfd,\mm)\) be a doubling, \(\mm\)-rectifiable space. Then for \(\mm\)-a.e.\ \(x\in\X\) the tangent cone
\(\Tan_x(\X,\sfd,\mm)\) consists uniquely of the space \(\big(\R^{n(x)},\|\cdot\|_x,\underline{\mathcal H}^{n(x)}_x,0\big)\).
\end{proposition}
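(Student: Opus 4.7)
The plan is to reduce to a single chart and then to $\R^{n_i}$ via the biLipschitz map. Fix an atlas $\{(U_i,\varphi_i)\}_{i\in\N}$ satisfying \eqref{eq:norm_x_prop}. By Lemma \ref{lem:local_tan}, for $\mm$-a.e.\ $x\in U_i$ it suffices to compute the tangent cone of $(U_i,\sfd|_{U_i\times U_i},\mm|_{U_i})$. I would push this forward via $\varphi_i\colon U_i\to V_i:=\varphi_i(U_i)\subset\R^{n_i}$ to the isometric copy $(V_i,\tilde\sfd_i,\tilde\mm_i)$, where $\tilde\sfd_i(a,b):=\sfd(\varphi_i^{-1}(a),\varphi_i^{-1}(b))$ and $\tilde\mm_i:=(\varphi_i)_\#\mm|_{U_i}=\rho\,\mathcal L^{n_i}$ (absolute continuity follows from $\mm|_{U_i}\ll\mathcal H^{n_i}$ and $\varphi_i$ being biLipschitz). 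The argument is carried out at points $x\in U_i$ for which: \eqref{eq:norm_x_prop} holds; $x$ is an $\mm$-density point of $U_i$; and $x_0:=\varphi_i(x)$ is both a Lebesgue density point of $V_i$ and a Lebesgue point of $\rho$ with $\rho(x_0)\in(0,\infty)$. These properties hold jointly for $\mm$-a.e.\ $x\in U_i$.

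Given any $r_k\searrow 0$, I set $\psi^k(y):=(\varphi_i(y)-x_0)/r_k$ on $B_{R_k r_k}(x)\cap U_i$, and choose $R_k\nearrow\infty$, $\varepsilon_k\searrow 0$ sufficiently slowly. Condition (i') of Remark \ref{rmk:equiv_tan_cone} is immediate. For (ii'), \eqref{eq:norm_x_prop} provides a modulus $\omega(\delta)\to 0$ with $\bigl|\sfd(y,z)-\|\varphi_i(y)-\varphi_i(z)\|_x\bigr|\leq\omega(\delta)\sfd(y,z)$ for $y,z\in B_\delta(x)\cap U_i$; dividing by $r_k$ and noting $\|\varphi_i(y)-\varphi_i(z)\|_x=r_k\|\psi^k(y)-\psi^k(z)\|_x$ yields (ii') with $\varepsilon_k\sim\omega(2R_k r_k)R_k\to 0$. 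For (iii'), specialising \eqref{eq:norm_x_prop} to $z=x$ gives $(1\mp o(1))\sfd(y,x)$ bounds for $\|\varphi_i(y)-x_0\|_x$; then the Lebesgue density of $V_i$ at $x_0$ ensures that each $v\in B^{\|\cdot\|_x}_{R_k}(0)$ is approximated by $\psi^k(y)$ for some $y\in B_{R_k r_k}(x)\cap U_i$.

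The decisive step is (iv'). Up to a negligible error coming from the $\mm$-density of $U_i$ at $x$, I would push forward along $\varphi_i$ and change variables $u=(\varphi_i(y)-x_0)/r_k$: for any $f\in C_b(\R^{n_i})$ with bounded support,
\[
\frac{1}{\mm(B_{r_k}(x))}\int_{B_{R_k r_k}(x)}f\circ\psi^k\,\d\mm \;=\; \frac{r_k^{n_i}}{\mm(B_{r_k}(x))}\int_{\R^{n_i}} f(u)\,\rho(x_0+r_k u)\,\nchi_{E_k}(u)\,\d u\;+\;o(1),
\]
where $E_k:=(\varphi_i(B_{R_k r_k}(x)\cap U_i)-x_0)/r_k$. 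The Lebesgue point of $\rho$ together with the density of $V_i$ make the integrand converge in $L^1_{\rm loc}$ to $\rho(x_0)f$. Separately, squeezing $\varphi_i(B_{r_k}(x)\cap U_i)$ between $(1\mp o(1))\|\cdot\|_x$-balls (via \eqref{eq:norm_x_prop}) intersected with $V_i$ and applying Lebesgue differentiation to $\rho$ gives $\mm(B_{r_k}(x))/r_k^{n_i}\to\rho(x_0)\,\mathcal L^{n_i}(B_1^{\|\cdot\|_x}(0))$. Since $\mathcal H^{n_i}_{\|\cdot\|_x}$ is a constant multiple of $\mathcal L^{n_i}$, this constant cancels and the limit becomes $\mathcal L^{n_i}(B_1^{\|\cdot\|_x}(0))^{-1}\int f\,\d\mathcal L^{n_i}=\int f\,\d\underline{\mathcal H}^{n_i}_x$. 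Since $(r_k)$ was arbitrary, Remark \ref{rmk:equiv_tan_cone} yields $\Tan_x(\X,\sfd,\mm)=\{(\R^{n_i},\|\cdot\|_x,\underline{\mathcal H}^{n_i}_x,0)\}$. The main obstacle is precisely this measure-theoretic computation in (iv'), where locating the normalisation constant requires meshing \eqref{eq:norm_x_prop} with Lebesgue differentiation for $\rho$ and the density of $V_i$ at $x_0$.
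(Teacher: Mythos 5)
Your proof is correct and follows the same overall strategy as the paper's: localise to a single chart via Lemma \ref{lem:local_tan}, take the rescaled chart map \(\psi^k(y)=(\varphi_i(y)-\varphi_i(x))/r_k\) as the pmGH-approximation, and drive conditions i')--iii') with \eqref{eq:norm_x_prop}. Where you genuinely diverge is in the measure-theoretic implementation. The paper first applies Lusin's theorem to refine the atlas into compact pieces on which the density \(\theta\) of \(\mm\) with respect to \(\mathcal H^n_\sfd\) is continuous, and then verifies iv) by comparing \(\psi^k_\#\mathcal H^n_\sfd\) with \(\mathcal H^n_x\) through the \(L_k\)-biLipschitz distortion of Hausdorff measure; you instead transport everything to \(\R^{n_i}\) and run the computation with Lebesgue points of the Euclidean density \(\rho\) and Lebesgue density points of \(V_i=\varphi_i(U_i)\), identifying \(\underline{\mathcal H}^{n_i}_x\) with normalised Lebesgue measure at the end. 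Both routes work, and yours has the merit of making explicit the density-point input that is genuinely needed for the almost-surjectivity in iii'): the image \(\psi^k\big(B_{R_kr_k}^\sfd(x)\big)\) need not contain any full \(\|\cdot\|_x\)-ball (the chart image can be, say, a fat Cantor set), so one can only expect the \(\varepsilon\)-density that iii') actually asks for, and that is exactly what the Lebesgue density of \(V_i\) at \(\varphi_i(x)\) delivers; the paper instead asserts the stronger containment \(B^{\|\cdot\|_x}_{R/L_k}(0)\subset\psi^k\big(B^\sfd_{Rr_k}(x)\big)\), which itself requires such a density argument. The only point to make precise in your write-up is the uniformity in ii'): \eqref{eq:norm_x_prop} gives a modulus \(\omega\) only at the fixed point \(x\), so for each given sequence \(r_k\searrow 0\) the radii \(R_k\) must be chosen in terms of \(\omega\) so that \(R_k\,\omega(2R_kr_k)\to 0\); your ``sufficiently slowly'' is the right idea and is easily quantified.
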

\begin{proof}
Let \(\big\{(U_i,\varphi_i)\big\}_{i\in\N}\) be an atlas of \((\X,\sfd,\mm)\). An application of Lusin Theorem yields the existence
of a partition \((K^i_j)_{j\in\N}\) of \(U_i\) (up to \(\mm\)-null sets) into compact sets such that each \(\theta|_{K^i_j}\) is continuous.
Moreover, Lemma \ref{lem:local_tan} gives \(\Tan_x(\X,\sfd,\mm)=\Tan_x(K^i_j,\sfd,\mm)\) for \(\mm\text{-a.e.\ }x\in K^i_j\).
Hence, we can assume without loss of generality that \(\X\) is compact, that \(\mm=\theta\mathcal H_\sfd^n\) for some continuous density
\(\theta\colon\X\to[0,+\infty)\), and that there exists a mapping \(\varphi\colon\X\to\R^n\) which is biLipschitz with its image. Then
our aim is to show that for \(\mm\)-a.e.\ \(x\in\X\) the pointed metric measure space \(\big(\R^n,\|\cdot\|_x,\underline{\mathcal H}^n_x,0\big)\)
is the unique element of the tangent cone \(\Tan_x(\X,\sfd,\mm)\).

Let \(x\in\X\) be a given point where \eqref{eq:norm_x_prop} holds  and \(\theta(x)>0\) (this property holds \(\mm\)-a.e.).
Fix any \(r_k\searrow 0\) and \(0<\varepsilon<1<R\). Then \eqref{eq:md_prop} yields a sequence \(\delta_k\searrow 0\) such that \(2\delta_k R<\varepsilon\),
\begin{equation}\label{eq:tan_to_rect_1}
\big|\sfd(y,z)-\|\varphi(y)-\varphi(z)\|_x\big|\leq\delta_k\sfd(y,z),\quad\text{ for every }y,z\in B_{R r_k}^\sfd(x),
\end{equation}
and \(|\theta(y)-\theta(x)|\leq\delta_k\theta(x)\) for every \(y\in B_{R r_k}^\sfd(x)\). Define
\(\psi^k(y)\coloneqq\frac{\varphi(y)-\varphi(x)}{r_k}\) for all \(y\in B_{R r_k}^\sfd(x)\).
Then the Borel maps \(\psi^k\colon B_{R r_k}^\sfd(x)\to\big(\R^n,\|\cdot\|_x,\underline{\mathcal H}^n_x,0\big)\) verify
the conditions in Definition \ref{def:tan_cone}:\\
{\color{blue}\(\rm i)\)} \(\psi^k(x)=0\) by definition.\\
{\color{blue}\(\rm ii)\)} It follows from \eqref{eq:tan_to_rect_1} that
\[
\big|\sfd(y,z)-r_k\|\psi^k(y)-\psi^k(z)\|_x\big|\leq\delta_k\sfd(y,z)\leq\varepsilon r_k,
\quad\text{ for every }y,z\in B_{R r_k}^\sfd(x).
\]
{\color{blue}\(\rm iii)\)} The same estimates also show that \(\psi^k\colon\big(B_{R r_k}^\sfd(x),\sfd\big)\to\big(\R^n,r_k\|\cdot\|_x\big)\)
is \(L_k\)-biLipschitz with its image, where we set \(L_k\coloneqq 1+\delta_k\). In particular, we obtain that
\begin{equation}\label{eq:tan_to_rect_2}
B_{R/L_k}^{\|\cdot\|_x}(0)=B_{R r_k/L_k}^{r_k\|\cdot\|_x}(0)\subset\psi^k\big(B_{R r_k}^\sfd(x)\big)\subset
B_{R r_k L_k}^{r_k\|\cdot\|_x}(0)=B_{R L_k}^{\|\cdot\|_x}(0).
\end{equation}
Given that \(R-\varepsilon<R/L_k\), we deduce from \eqref{eq:tan_to_rect_2} that \(B_{R-\varepsilon}^{\|\cdot\|_x}(0)\subset\psi^k\big(B_{R r_k}^\sfd(x)\big)\).\\
{\color{blue}\(\rm iv)\)} The \(L_k\)-biLipschitzianity of the mapping \(\psi^k|_{B_{R r_k}^\sfd(x)}\) also ensures that
\begin{equation}\label{eq:tan_to_rect_3}
\frac{r_k^n}{L_k^n}\mathcal H^n_x|_{\psi^k(B_{R r_k}^\sfd(x))}
\leq\psi^k_\#\big(\mathcal H^n_\sfd|_{B_{R r_k}^\sfd(x)}\big)\leq
r_k^n L_k^n\,\mathcal H^n_x|_{\psi^k(B_{R r_k}^\sfd(x))}.
\end{equation}
Recalling that \(\theta(x)(1-\delta_k)\leq\theta\leq\theta(x)L_k\) on \(B_{R r_k}^\sfd(x)\), we deduce from \eqref{eq:tan_to_rect_2} and
\eqref{eq:tan_to_rect_3} that
\[
\frac{\psi^k_\#\big(\mm|_{B_{R r_k}^\sfd(x)}\big)}{\mm\big(B_{r_k}^\sfd(x)\big)}\leq
\frac{L_k}{1-\delta_k}\frac{\psi^k_\#\big(\mathcal H^n_\sfd|_{B_{R r_k}^\sfd(x)}\big)}{\mathcal H^n_\sfd\big(B_{r_k}^\sfd(x)\big)}\leq
\frac{L_k^{2n+1}}{1-\delta_k}\frac{\mathcal H^n_x|_{B_{R L_k}^{\|\cdot\|_x}(0)}}{\mathcal H^n_x\big(B_{1/L_k}^{\|\cdot\|_x}(0)\big)}=
\frac{L_k^{3n+1}}{1-\delta_k}\,\underline{\mathcal H}^n_x|_{B_{R L_k}^{\|\cdot\|_x}(0)}.
\]
Similarly, we can estimate
\[
\frac{\psi^k_\#\big(\mm|_{B_{R r_k}^\sfd(x)}\big)}{\mm\big(B_{r_k}^\sfd(x)\big)}\geq
\frac{1-\delta_k}{L_k^{3n+1}}\,\underline{\mathcal H}^n_x|_{B_{R/L_k}^{\|\cdot\|_x}(0)}.
\]
Since \(L_k\to 1\) as \(k\to\infty\), we finally conclude that
\(\mm\big(B_{r_k}^\sfd(x)\big)^{-1}\psi^k_\#\big(\mm|_{B_{R r_k}^\sfd(x)}\big)\rightharpoonup\underline{\mathcal H}^n_x|_{B_R^{\|\cdot\|_x}(0)}\)
in duality with bounded continuous functions \(f\colon\R^n\to\R\) having bounded support.
\end{proof}
\begin{proof}[Proof of Theorem \ref{thm:generic_tan_PI_rect}]
Let \((\X,\sfd,\mm)\) be an infinitesimally Hilbertian, \(\mm\)-rectifiable PI space. The last part of Proposition \ref{prop:equiv_iH_rect}
tells that \(\|\cdot\|_x\) is a Hilbert norm for \(\mm\)-a.e.\ \(x\in\X\). Hence, Proposition \ref{prop:tan_to_rect} ensures that
for \(\mm\)-a.e.\ \(x\in\X\) the tangent cone \(\Tan_x(\X,\sfd,\mm)\) contains only the infinitesimally Hilbertian space
\(\big(\R^{n(x)},\|\cdot\|_x,\underline{\mathcal H}^{n(x)}_x,0\big)\), yielding the sought conclusion.
\end{proof}
\section{Proof of Theorem \ref{thm:example_non_PI}}\label{s:proof_thm1}
Let \(\X\subset\R^2\) be given by \(\X\coloneqq C\times C\), where \(C\subset\R\) is a Cantor set of positive \(\mathcal L^1\)-measure.
We endow \(\X\) with the distance \(\sfd\), given by \(\sfd(a,b)\coloneqq\|a-b\|_1\) for every \(a,b\in\X\), and with the measure \(\mm\coloneqq\mathcal L^2|_\X\).
\begin{proof}[Proof of Theorem \ref{thm:example_non_PI}]
We check that \((\X,\sfd,\mm)\) verifies Theorem \ref{thm:example_non_PI}. It is easy to show that it is \(2\)-Ahlfors regular and \(\mm\)-rectifiable.
Moreover, the space \(\X\) (being totally disconnected) cannot contain non-constant absolutely continuous curves, thus the equivalent characterisations
of \(H^{1,2}(\X)\) in \cite{AmbrosioGigliSavare11-3} imply that \(H^{1,2}(\X)=L^2(\X)\) and \(\|f\|_{H^{1,2}(\X)}=\|f\|_{L^2(\X)}\) for all
\(f\in H^{1,2}(\X)\). Hence, trivially, the metric measure space \((\X,\sfd,\mm)\) is infinitesimally Hilbertian. Finally, it follows from Lemma \ref{lem:local_tan}
that \(\Tan_a(\X,\sfd,\mm)=\Tan_a\big(\R^2,\|\cdot\|_1,\mathcal L^2\big)\) holds for \(\mm\)-a.e.\ point \(a\in\X\), and it is immediate to check that the norms
\(\big\{\|\cdot\|_a\big\}_{a\in\R^2}\) associated with the \(\mathcal L^2\)-rectifiable space \(\big(\R^2,\|\cdot\|_1,\mathcal L^2\big)\)
satisfy \(\|\cdot\|_a=\|\cdot\|_1\) for every \(a\in\R^2\). This fact implies (thanks to Proposition \ref{prop:tan_to_rect})
that for \(\mm\)-a.e.\ \(a\in\X\) the tangent cone \(\Tan_a(\X,\sfd,\mm)\) consists exclusively of the space
\(\big(\R^2,\|\cdot\|_1,\underline{\mathcal H}^2_{\|\cdot\|_1},0\big)\), which is not infinitesimally Hilbertian by Proposition \ref{prop:equiv_iH_rect}.
\end{proof}
\begin{remark}{\rm
It is also possible to provide an example of metric measure space \((\X,\sfd,\mm)\) verifying Theorem \ref{thm:example_non_PI} whose Sobolev space
\(H^{1,2}(\X)\) is non-trivial. To this aim, fix a Cantor set \(C\subset\R\) of positive \(\mathcal L^1\)-measure and define \(\X\coloneqq C\times\R\).
We endow the space \(\X\subset\R^2\) with the distance \(\sfd(a,b)\coloneqq\|a-b\|_1\) and with the measure \(\mm\coloneqq\mathcal L^2|_\X\). Exactly as before,
\((\X,\sfd,\mm)\) is \(2\)-Ahlfors regular, \(\mm\)-rectifiable, and its tangents are \(\mm\)-a.e.\ unique and infinitesimally non-Hilbertian. The infinitesimal
Hilbertianity of \((\X,\sfd,\mm)\) boils down to the fact that all norms on \(\R\) are Hilbert. Indeed, one can check
that a given function \(f\in L^2(\X)\) belongs to \(H^{1,2}(\X)\) if and only if \(f(x,\cdot)\in W^{1,2}(\R)\) holds for \(\mathcal L^1\)-a.e.\ \(x\in C\)
and \(\int_C\big\||Df(x,\cdot)|\big\|_{L^2(\R)}^2\,\d\mathcal L^1(x)<+\infty\). Moreover, for any function \(f\in H^{1,2}(\X)\) we have that
\[
\|f\|_{H^{1,2}(\X)}^2=\int|f|^2\,\d\mm+\int_C\big\||Df(x,\cdot)|\big\|_{L^2(\R)}^2\,\d\mathcal L^1(x).
\]
In particular, \(H^{1,2}(\X)\) is a Hilbert space, thus yielding the sought conclusion.
\fr}\end{remark}
\section{Proof of Theorem \ref{thm:example_PI}}\label{s:proof_thm2}
By a \emph{dyadic square} in the plane we mean an open square \(Q\subset\R^2\) of the form
\[
Q=Q^k_{i,j}\coloneqq\big(i 2^k,(i+1)2^k\big)\times\big(j 2^k,(j+1)2^k\big),\quad\text{ for some }i,j,k\in\Z.
\]
We denote by \(\mathcal D\) the family of all dyadic squares in the plane. The side-length of a dyadic square \(Q\in\mathcal D\)
is denoted by \(\ell(Q)\). Consider the family \(\mathcal W\coloneqq\{Q^k_{i,j}\,:\,k\in\Z,\,(i,j)\in F\}\), where
\[\begin{split}
F\coloneqq\big\{&(1,0),(1,1),(0,1),(-1,1),(-2,1),(-2,0),\\&(-2,-1),(-2,-2),(-1,-2),(0,-2),(1,-2),(1,-1)\big\}.
\end{split}\]
Observe that \(\mathcal W\) is the Whitney decomposition of \(\R^2\setminus\{0\}\). Given any \(Q\in\mathcal W\) with
\(\ell(Q)=2^k\), we define the family \(\mathcal S(Q)\subset\mathcal D\) as
\[
\mathcal S(Q)\coloneqq\big\{Q'\in\mathcal D\;\big|\;Q'\subset Q,\,\ell(Q')=2^{k+\min\{k,0\}}\big\}.
\]
It holds that \(\mathcal S(Q)=\{Q\}\) if \(k\geq 0\), while \(\mathcal S(Q)\) is a collection of \(4^{-k}\) pairwise disjoint dyadic squares
of side-length \(4^k\) if \(k<0\). It also holds \(\bar Q=\bigcup_{Q'\in\mathcal S(Q)}\bar Q'\). Define
\(\mathcal S\coloneqq\bigcup_{Q\in\mathcal W}\mathcal S(Q)\). Moreover, we define \(N_k\coloneqq[-2^{-k},2^{-k}]^2\subset\R^2\) and
\(\mathcal S_k\coloneqq\{Q\in\mathcal S\,:\,Q\subset N_k\}\) for every \(k\in\N\). Observe that \(N_k=\bigcup_{Q\in\mathcal S_k}\bar Q\) and
\(\ell(Q)\leq 4^{-(k+1)}\) for every \(Q\in\mathcal S_k\).
\begin{remark}\label{rmk:local_dist}{\rm
Given any function \(\rho\colon\R^2\to[1,2]\), it holds that
\[
\sfd_\rho^{N_k}(x,y)=\sfd_\rho(x,y),\quad\text{ for every }k\in\N\text{ and }x,y\in N_{k+2}.
\]
Indeed, the \(\sfd_\rho\)-distance between any two points in \(N_{k+2}\) cannot exceed \(2\sqrt 2/2^{k+1}\), while any Lipschitz curve \(\gamma\)
in \(\R^2\) which joins two points in \(N_{k+2}\) and intersects \(\R^2\setminus N_k\) satisfies the estimate \(\ell_\rho(\gamma)\geq 2(2^{-(k+1)}+2^{-(k+2)})\).
Given that \(2\big(\frac{1}{2^{k+1}}+\frac{1}{2^{k+2}}\big)=\frac{3}{2^{k+1}}>\frac{2\sqrt 2}{2^{k+1}}\), we deduce that to compute the \(\sfd_\rho\)-distance
between two points in \(N_{k+2}\) it is sufficient to consider just those Lipschitz curves which are contained in \(N_k\), whence the claimed identity follows.
\fr}\end{remark}
Given any \(n\in\N\), let us fix a smooth function \(\psi_n\colon(-1,2)^2\to[1,2]\) such that \(\psi_n=1\) on some neighbourhood
of \(\partial([0,1]^2)\) and \(\psi_n=2\) in the smaller square \([2^{-(n+2)},1-2^{-(n+2)}]^2\). We can further require that
\(\psi_n\leq\psi_{n+1}\) for every \(n\in\N\). Moreover, we define \(\psi_\infty\colon[0,1]^2\to\{1,2\}\) as
\(\psi_\infty\coloneqq\nchi_{\partial([0,1]^2)}+2\nchi_{(0,1)^2}\). Notice that \(\psi_n\nearrow\psi_\infty\) on \([0,1]^2\) as \(n\to\infty\).
For any \(Q\in\mathcal S\), we define the transformation \(\theta_Q\colon[0,1]^2\to\bar Q\) as
\(\theta_Q(x,y)\coloneqq(\tau_Q\circ\delta_{\ell(Q)})(x,y)\) for all \((x,y)\in[0,1]^2\), where \(\delta_\lambda\colon\R^2\to\R^2\)
is the dilation \((x,y)\mapsto(\lambda x,\lambda y)\), while \(\tau_Q\colon\R^2\to\R^2\) stands for the unique translation satisfying
\(\tau_Q([0,\ell(Q)]^2)=Q\). Given any \(k,n\in\N\), we define \(\rho^k_n\colon N_k\to[1,2]\) as
\[
\rho^k_n\coloneqq\nchi_{R\cap N_k}+\sum_{Q\in\mathcal S_k}\nchi_Q\,\psi_n\circ\theta_Q^{-1},
\]
where we set \(R\coloneqq\R^2\setminus\bigcup_{Q\in\mathcal S}Q\). Furthermore, we define the function \(\rho_\infty\colon\R^2\to\{1,2\}\) as
\[
\rho_\infty\coloneqq\nchi_R+2\nchi_{\R^2\setminus R}=\nchi_R+\sum_{Q\in\mathcal S}\nchi_Q\,\psi_\infty\circ\theta_Q^{-1}.
\]
We have that \(\rho^k_n\nearrow\rho_\infty\) on \(N_k\) as \(n\to\infty\), whence it follows that
\(\sfd^{N_k}_{\rho^k_n}\nearrow\sfd^{N_k}_{\rho_\infty}\) on \(N_k\times N_k\) as \(n\to\infty\). Given that \(\sfd_{\rho_\infty}\leq 2\,\sfd_{\rm Eucl}\),
the function \(\sfd^{N_k}_{\rho_\infty}\) is continuous on \(N_k\times N_k\) and thus accordingly \(\sfd^{N_k}_{\rho^k_n}\rightrightarrows\sfd^{N_k}_{\rho_\infty}\)
uniformly on the compact set \(N_k\times N_k\) as \(n\to\infty\). Therefore, we can choose \(n(k)\in\N\) so that
\(\sfd_{\rho_k}^{N_k}(a,b)\geq\sfd_{\rho_\infty}^{N_k}(a,b)-4^{-(k+2)}\geq\sfd_{\rho_\infty}(a,b)-4^{-(k+2)}\) for all \(a,b\in N_k\),
where we set \(\rho_k\coloneqq\rho_{n(k)}^k\). We can assume without loss of generality that \(\N\ni k\mapsto n(k)\in\N\) is strictly increasing.
We now define the auxiliary function \(m\colon\mathcal S\to\N\) as
\[
m(Q)\coloneqq\left\{\begin{array}{ll}
n(k),\\
0,
\end{array}\quad\begin{array}{ll}
\text{ if }Q\in\mathcal S_k\setminus\mathcal S_{k+1}\text{ for some }k\in\N,\\
\text{ if }Q\in\mathcal S\setminus\mathcal S_0.
\end{array}\right.
\]
Finally, we define the function \(\rho\colon\R^2\to[1,2]\) as
\[
\rho\coloneqq\nchi_R+\sum_{Q\in\mathcal S}\nchi_Q\,\psi_{m(Q)}\circ\theta_Q^{-1}.
\]
Observe that \(\rho\) is smooth on \(\R^2\setminus\{0\}\).
Given that \(\rho\geq\rho_k\) on \(N_k\) for any \(k\in\N\) by construction, we deduce that \(\sfd_\rho^{N_k}\geq\sfd_{\rho_k}^{N_k}\) and thus
\begin{equation}\label{eq:aux_lower_bound_d_rho}
\sfd_\rho(a,b)\geq\sfd_{\rho_\infty}(a,b)-\frac{1}{4^{k+2}},\quad\text{ for every }k\in\N\text{ and }a,b\in N_{k+2},
\end{equation}
where we used that \(\sfd_\rho=\sfd_\rho^{N_k}\geq\sfd_{\rho_k}^{N_k}\geq\sfd_{\rho_\infty}-4^{-(k+2)}\)
on \(N_{k+2}\times N_{k+2}\) by Remark \ref{rmk:local_dist}.
\begin{lemma}\label{lem:bound_d_rho}
Let \(k\geq 2\) be given. Then it holds that
\begin{equation}\label{eq:bound_d_rho}
\|a-b\|_1-\frac{1}{4^k}\leq\sfd_\rho(a,b)\leq\|a-b\|_1+\frac{1}{4^k},\quad\text{ for every }a,b\in N_k.
\end{equation}
\end{lemma}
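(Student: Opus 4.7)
The plan is to prove the two inequalities in \eqref{eq:bound_d_rho} separately, exploiting the interplay between $\sfd_\rho$ and the $\ell^1$ metric mediated by the skeleton $R$.

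\textbf{Lower bound.} The idea is to combine the estimate \eqref{eq:aux_lower_bound_d_rho}, applied with index $k-2\geq 0$ (so that $N_k = N_{(k-2)+2}$), which yields $\sfd_\rho(a,b) \geq \sfd_{\rho_\infty}(a,b) - 4^{-k}$ for every $a,b \in N_k$, with the pointwise bound $\sfd_{\rho_\infty}(a,b) \geq \|a-b\|_1$ valid on all of $\R^2$. For the latter, given a Lipschitz curve $\gamma\colon[0,1]\to\R^2$ with endpoints $a,b$, I will show that $\rho_\infty(\gamma_t)|\dot\gamma_t| \geq \|\dot\gamma_t\|_1$ for $\mathcal L^1$-a.e.\ $t$, which upon integration gives $\ell_{\rho_\infty}(\gamma) \geq \|a-b\|_1$. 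I split the time interval by whether $\gamma_t \in R$. Where $\gamma_t \notin R$ one has $\rho_\infty(\gamma_t) = 2$ and the elementary inequality $2|v| \geq \|v\|_1$, equivalent to $(|v_1|+|v_2|)^2 \leq 2|v|^2$, suffices. Where $\gamma_t \in R$, note that $R$ is a countable union of horizontal and vertical segments together with $\{0\}$; for a.e.\ $t$ whose image lies in the interior of a horizontal segment $\{y=c\}$, the second component $\gamma^2$ is locally constant on the corresponding preimage, so $\dot\gamma_t^2 = 0$ a.e.\ there, whence $|\dot\gamma_t| = \|\dot\gamma_t\|_1$; combined with $\rho_\infty = 1$ on $R$ this concludes. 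The vertical case is symmetric, and the countable set of grid vertices contributes a null set.

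\textbf{Upper bound.} I construct an explicit competitor path. Crucially, by the definition of the $\psi_n$'s, $\rho$ equals $1$ not only on $R$ but on an open neighborhood of it. Let $Q_a, Q_b \in \mathcal S_k$ be the dyadic squares whose closures contain $a, b$, so that $\ell(Q_a), \ell(Q_b) \leq 4^{-(k+1)}$. The competitor is built in three stages. First, project $a$ axis-parallel onto $\partial Q_a \subset R$, obtaining $a^* \in R$ with $\|a - a^*\|_1 \leq \ell(Q_a)/2$; since $\rho \leq 2$, this segment has $\sfd_\rho$-cost at most $\ell(Q_a) \leq 4^{-(k+1)}$. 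Symmetrically define $b^*$ near $b$. Second, build a piecewise axis-parallel staircase from $a^*$ to $b^*$ lying entirely in $R$; because $\rho = 1$ on $R$ and axis-parallel motion satisfies $|\dot\gamma| = \|\dot\gamma\|_1$, the $\sfd_\rho$-cost of this middle stage equals its $\ell^1$-length. Third, the mirror segment from $b^*$ to $b$ costs at most $\ell(Q_b) \leq 4^{-(k+1)}$. If the staircase can be arranged to have $\ell^1$-length at most $\|a^*-b^*\|_1 \leq \|a-b\|_1 + 4^{-(k+1)}$, summing the three contributions yields $\sfd_\rho(a,b) \leq \|a-b\|_1 + 3 \cdot 4^{-(k+1)} < \|a-b\|_1 + 4^{-k}$, as required.

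\textbf{Main obstacle.} The delicate part is producing the staircase. The skeleton $R$ is an adaptive grid: inside each Whitney square $\Omega_j \subset N_k$ the subdivision has cell-size $4^j \leq 4^{-(k+1)}$, and adjacent Whitney squares may carry different cell-sizes. The staircase must therefore interleave moves along subdivision grid lines within a single Whitney square with transitions across Whitney boundaries (which themselves lie in $R$), possibly bridging two grids of different scales. A careful bookkeeping will show that every such transition forces a detour of at most one cell-size, $O(4^{-(k+1)})$, and since only finitely many Whitney squares are visited and their relevant cell-sizes form a geometric sequence bounded by $4^{-(k+1)}$, the total excess over $\|a^*-b^*\|_1$ remains $O(4^{-(k+1)})$, comfortably inside the $4^{-k}$ error budget. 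I expect this combinatorial verification, matching scales across the Whitney decomposition, to be the only non-routine step in the proof.
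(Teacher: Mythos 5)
Your proposal follows essentially the same route as the paper: the lower bound via \(\sfd_\rho\geq\sfd_{\rho_\infty}-4^{-k}\) on \(N_k\) (i.e.\ \eqref{eq:aux_lower_bound_d_rho} with index \(k-2\)) combined with the level-set derivative argument giving \(\ell_{\rho_\infty}(\gamma)\geq\|a-b\|_1\), and the upper bound via axis-parallel projection onto the boundary of the containing square of \(\mathcal S_k\) followed by a polygonal path in \(R\) of \(\ell^1\)-optimal cost. The one step you defer --- the existence of a (near-)optimal monotone staircase inside the adaptive grid \(R\) --- is precisely the step the paper also asserts without detailed verification, so your argument is correct to the same standard as the original.
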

\begin{proof}
By continuity, it is sufficient to check the validity of the statement when \(a,b\in N_k\setminus R\).\\
{\color{blue}\textsc{Upper bound.}} Call \(Q_a\) (resp.\ \(Q_b\)) the unique element of \(\mathcal S_k\) containing \(a\) (resp.\ \(b\)).
We can find two points \(a'\in\partial Q_a\) and \(b'\in\partial Q_b\) such that \(\|a'-b'\|_1\leq\|a-b\|_1\). We can also require that
each of the segments \([a,a']\) and \([b',b]\) is either horizontal or vertical. Hence, calling \(\gamma_a\) (resp.\ \(\gamma_b\)) the
constant-speed parametrisation of the interval \([a,a']\) (resp.\ of \([b',b]\)), we have that \(\ell_\rho(\gamma_b)\leq 2\ell(Q_a)\)
and \(\ell_\rho(\gamma_a)\leq 2\ell(Q_b)\). We can construct a polygonal curve \(\tilde\gamma\colon[0,1]\to R\) with
\(\tilde\gamma_0=a'\), \(\tilde\gamma_1=b'\), and \(\ell_\rho(\tilde\gamma)=\|a'-b'\|_1\). Then the concatenation
\(\gamma\coloneqq\gamma_a*\tilde\gamma*\gamma_b\) satisfies
\[
\ell_\rho(\gamma)=\ell_\rho(\gamma_a)+\ell_\rho(\tilde\gamma)+\ell_\rho(\gamma_b)\leq 2\ell(Q_a)+\|a'-b'\|_1+2\ell(Q_b)\leq
\|a-b\|_1+\frac{1}{4^k}.
\]
Since the curve \(\gamma\) joins \(a\) and \(b\), we can conclude that the upper bound in \eqref{eq:bound_d_rho} is verified.\\
{\color{blue}\textsc{Lower bound.}} Fix any Lipschitz curve \(\gamma\colon[0,1]\to\R^2\) joining \(a\) and \(b\). We denote by \(H\subset\R^2\)
(resp.\ \(V\subset\R^2\)) the intersection between \(R\) and \(\R\times\{j 2^k\,:\,j,k\in\Z\}\) (resp.\ \(\{i 2^k\,:\,i,k\in\Z\}\times\R\)).
Notice that \(H\cap V\) is a countable family. We write \([0,1]=I_M\cup I_H\cup I_V\), where we define
\[
I_M\coloneqq\big\{t\in[0,1]\;\big|\;\gamma_t\in\R^2\setminus R\big\},\quad I_H\coloneqq\big\{t\in[0,1]\;\big|\;\gamma_t\in H\big\},
\quad I_V\coloneqq\big\{t\in[0,1]\;\big|\;\gamma_t\in V\big\}.
\]
Denote \(a=(a_1,a_2)\), \(b=(b_1,b_2)\), and \(\gamma=(\gamma^1,\gamma^2)\). Then \(\gamma^1\) is a Lipschitz curve in \(\R\) that joins \(a_1\) and \(b_1\), so
that \(|a_1-b_1|\leq\int_0^1|\dot\gamma^1_t|\,\d t\). For any \(i,k\in\Z\) it holds that \(\gamma^1_t=i 2^k\) for every \(t\in\gamma^{-1}(\{i 2^k\}\times\R)\)
and thus \(\dot\gamma^1_t=0\) for a.e.\ \(t\in\gamma^{-1}(\{i 2^k\}\times\R)\). This implies \(\dot\gamma^1_t=0\) for a.e.\ \(t\in I_V\), so that
\(|a_1-b_1|\leq\int_{I_M\cup I_H}|\dot\gamma^1_t|\,\d t\). Similarly, one has \(|a_2-b_2|\leq\int_{I_M\cup I_V}|\dot\gamma^2_t|\,\d t\). Therefore, we can estimate
\[\begin{split}
\ell_{\rho_\infty}(\gamma)&=\int_{I_M}2|\dot\gamma_t|\,\d t+\int_{I_H}|\dot\gamma_t|\,\d t+\int_{I_V}|\dot\gamma_t|\,\d t
\geq\int_{I_M}|\dot\gamma^1_t|+|\dot\gamma^2_t|\,\d t+\int_{I_H}|\dot\gamma^1_t|\,\d t+\int_{I_V}|\dot\gamma^2_t|\,\d t\\
&=\int_{I_M\cup I_H}|\dot\gamma^1_t|\,\d t+\int_{I_M\cup I_V}|\dot\gamma^2_t|\,\d t\geq|a_1-b_1|+|a_2-b_2|=\|a-b\|_1.
\end{split}\]
Thanks to the arbitrariness of \(\gamma\), we deduce that \(\sfd_{\rho_\infty}(a,b)\geq\|a-b\|_1\). Recalling \eqref{eq:aux_lower_bound_d_rho},
we can finally conclude that the lower bound in \eqref{eq:bound_d_rho} is verified, whence the statement follows.
\end{proof}
We endow the smooth manifold \(M\coloneqq\R^2\setminus\{0\}\) with the Riemannian metric \(g\), which is defined as \(g_x(v,w)\coloneqq\rho(x)\langle v,w\rangle\).
Call \(\mm\) the \(2\)-dimensional Hausdorff measure on \((\R^2,\sfd_\rho)\). Given that the restriction of \(\sfd_\rho\) to \(M\) is (by definition) the length
distance induced by the Riemannian metric \(g\), we have that \(\mm|_M\) coincides with the volume measure of \((M,g)\). By exploiting the fact that
\(\sfd_{\rm Eucl}\leq\sfd_\rho\leq 2\,\sfd_{\rm Eucl}\), one can also deduce that \(\mathcal L^2\leq\mm\leq 2\mathcal L^2\), thus in particular
\((\R^2,\sfd_\rho,\mm)\) is an \(\mm\)-rectifiable, \(2\)-Ahlfors regular PI space. Moreover, Lemma \ref{lem:d_rho_iH} ensures that the metric measure space
\((\R^2,\sfd_\rho,\mm)\) is infinitesimally Hilbertian.

\begin{proof}[Proof of Theorem \ref{thm:example_PI}]
The metric measure space \((\R^2,\sfd_\rho,\mm)\) constructed above satisfies the assumptions of Theorem \ref{thm:example_PI}. Let us verify that
\(\Tan_0(\R^2,\sfd_\rho,\mm)\) contains an infinitesimally non-Hilbertian element. Given any \(k\in\N\), we define
\(r_k\coloneqq 1/(k2^k)\), \(R_k\coloneqq k\), \(\varepsilon_k\coloneqq k/2^k\), and
\[
\psi^k(a)\coloneqq\frac{a}{r_k},\quad\text{ for every }a\in B_{R_k r_k}^{\sfd_\rho}(x).
\]
Let us check that the Borel maps \(\psi^k\colon B_{R_k r_k}^{\sfd_\rho}(0)\to\R^2\) satisfy the conditions in Remark \ref{rmk:equiv_tan_cone},
when the target \(\R^2\) is endowed with the norm \(\|\cdot\|_1\) and a suitable measure \(\mu\) with \(0\in\spt(\mu)\).\\
{\color{blue}\(\rm i')\)} By definition, \(\psi^k(0)=0\) for every \(k\in\N\).\\
{\color{blue}\(\rm ii')\)} Let \(k\geq 2\) be fixed. Since \(\sfd_{\rm Eucl}\leq\sfd_\rho\), we have that \(B_{R_k r_k}^{\sfd_\rho}(0)
=B_{2^{-k}}^{\sfd_\rho}(0)\subset N_k\). Therefore,

\[
\big|\sfd_\rho(a,b)-r_k\|\psi^k(a)-\psi^k(b)\|_1\big|=\big|\sfd_\rho(a,b)-\|a-b\|_1\big|\leq\frac{1}{4^k}=\varepsilon_k r_k
\]
holds for every \(a,b\in B_{R_k r_k}^{\sfd_\rho}(x)\), where the inequality follows from Lemma \ref{lem:bound_d_rho}.\\
{\color{blue}\(\rm iii')\)} Fix any \(k\geq 2\) and \(v\in B_{R_k-\varepsilon_k}^{\|\cdot\|_1}(0)\). Given that
\(\|r_k v\|_1<(R_k-\varepsilon_k)r_k=2^{-k}-4^{-k}<2^{-k}\), one has \(a\coloneqq r_k v\in B_{2^{-k}}^{\|\cdot\|_1}(0)\subset N_k\).
Hence, Lemma \ref{lem:bound_d_rho} ensures that \(\sfd_\rho(a,0)\leq\|a\|_1+4^{-k}<2^{-k}\), which implies that
\(a\in B_{2^{-k}}^{\sfd_\rho}(0)=B_{R_k r_k}^{\sfd_\rho}(0)\) and thus \(v=\psi^k(a)\in\psi^k\big(B_{R_k r_k}^{\sfd_\rho}(0)\big)\), as desired.\\
{\color{blue}\(\rm iv')\)} We aim to find a boundedly-finite Borel measure \(\mu\geq 0\) on \(\big(\R^2,\|\cdot\|_1\big)\) such that
\[
\mu_k\coloneqq\frac{\psi^k_\#\big(\mm|_{B_{R_k r_k}^{\sfd_\rho}(0)}\big)}{\mm\big(B_{r_k}^{\sfd_\rho}(0)\big)}\rightharpoonup\mu,
\quad\text{ in duality with compactly-supported, continuous functions,}
\]
up to a subsequence in \(k\). Up to a diagonalisation argument, it is sufficient to show that for any compact set \(K\subset\R^2\) the sequence
\(\mu_k|_K\) weakly subconverges to some finite Borel measure on \(K\) in duality with continuous functions on \(K\). In turn, to obtain the
latter condition it is enough to prove that \(\sup_{k\in\N}\mu_k(K)<+\infty\). Let us check it: for any \(k\in\N\), we can estimate
\[
\mu_k(K)=\frac{\mm\big((\psi^k)^{-1}(K)\cap B_{2^{-k}}^{\sfd_\rho}(0)\big)}{\mm\big(B_{r_k}^{\sfd_\rho}(0)\big)}\overset{(\star)}\leq
\frac{\mm(r_k K)}{\mm\big(B_{r_k/2}^{\|\cdot\|_2}(0)\big)}\leq\frac{2\mathcal L^2(r_k K)}{\mathcal L^2\big(B_{r_k/2}^{\|\cdot\|_2}(0)\big)}=\frac{8\mathcal L^2(K)}{\pi},
\]
where in the starred inequality we used the fact that \(\sfd_\rho\leq 2\,\sfd_{\rm Eucl}\) and thus \(B_{r_k/2}^{\|\cdot\|_2}(0)\subset B_{r_k}^{\sfd_\rho}(0)\).
Finally, we aim to show that \(0\in{\rm spt}(\mu)\), or equivalently that \(\limsup_{k\to\infty}\mu_k\big(B_\delta^{\|\cdot\|_2}(0)\big)>0\) holds for every
\(\delta\in(0,1)\). Given any such \(\delta\), we can find \(\bar k\in\N\) and \(C_\delta>0\) such that \(R_k/2>\delta\) and \(\mm\big(B_{\delta r_k}^{\|\cdot\|_2}(0)\big)
\geq C_\delta\,\mm\big(B_{r_k}^{\|\cdot\|_2}(0)\big)\) for every \(k\geq\bar k\); for the latter property, we are using the fact that \(\big(\R^2,\|\cdot\|_2,\mm\big)\)
is doubling. In particular, \(B_{\delta r_k}^{\|\cdot\|_2}(0)\subset B_{R_k r_k/2}^{\|\cdot\|_2}(0)\) for all \(k\geq\bar k\). Hence,
\[
\mu_k\big(B_\delta^{\|\cdot\|_2}(0)\big)=
\frac{\mm\big(B_{\delta r_k}^{\|\cdot\|_2}(0)\cap B_{R_k r_k}^{\sfd_\rho}(0)\big)}{\mm\big(B_{r_k}^{\sfd_\rho}(0)\big)}
\geq\frac{\mm\big(B_{\delta r_k}^{\|\cdot\|_2}(0)\cap B_{R_k r_k/2}^{\|\cdot\|_2}(0)\big)}{\mm\big(B_{r_k}^{\|\cdot\|_2}(0)\big)}\geq C_\delta,
\quad\text{ for all }k\geq\bar k.
\]
All in all, we proved that \(\big(\R^2,\|\cdot\|_1,\mu,0\big)\in\Tan_0(\R^2,\sfd_\rho,\mm)\). Since \(\|\cdot\|_1\) is a non-Hilbert norm, we conclude from
\cite[Lemma 4.4]{LP20} that \(\big(\R^2,\|\cdot\|_1,\mu\big)\) is not infinitesimally Hilbertian, thus completing the proof of Theorem \ref{thm:example_PI}.
\end{proof}
\begin{remark}{\rm
Theorem \ref{thm:example_PI} could be modified so that for any closed set \(F \subset \R^2\) of Lebesgue measure zero, there exists a distance \(\sfd_F\) on \(\R^2\) so that \((\R^2,\sfd_F,\mm)\) is an infinitesimally Hilbertian, \(\mm\)-rectifiable, Ahlfors regular PI space, and the set of points \(\bar x\in\R^2\) for which \(\Tan_{\bar x}(\R^2,\sfd_F,\mm)\) contains an infinitesimally non-Hilbertian element is exactly \(F\). Indeed, the only modifications needed in the construction are to take \(\mathcal W\) to be the Whitney decomposition of \(\R^2 \setminus F\) and to define the function \(\rho \colon \R^2 \to [1,2]\) as \(2\) on \(F\) and elsewhere via the same definitions as in the proof above. Then the infinitesimal Hilbertianity of the space \((\R^2,\sfd_F,\mm)\) follows from the fact that \(F\) has zero measure, while the infinitesimal non-Hilbertianity of the tangents at \(\bar x \in F\) follows as above. Notice that since \(F\) has zero measure and \(\rho= 2\) on \(F\), the tangent spaces at every point \(\bar x \in F\) are isomorphic to the one obtained in Theorem \ref{thm:example_PI}. 
In the case \(F = \{0\}\) the function \(\rho\) was defined to be \(1\) on \(F\) in order to make \(\rho\) lower semicontinuous. This allowed the soft argument via uniform convergence leading to the existence of \(n(k)\). On a general \(F\) we cannot define \(\rho\) to be identically \(1\), as we might then fail to be infinitesimally non-Hilbertian at the tangents. To overcome this, one could, for example, make a more quantitative argument in the lower bound in Lemma \ref{lem:bound_d_rho}.
We chose to formulate Theorem \ref{thm:example_PI} only in the simplest case \(F = \{0\}\) since the more general case contains essentially no new ideas and only slightly complicates the presentation.
\fr}\end{remark}
\appendix
\section{Infinitesimal Hilbertianity and asymptotic cones}\label{app:asympt_cones}
As one might expect, the infinitesimal Hilbertianity condition has little to do with the large scale geometry of the space under consideration.
Indeed, as shown by Theorem \ref{thm:example_asym_cone} below, it is rather easy to construct a `nice' infinitesimally Hilbertian metric measure
space whose asymptotic cone is not infinitesimally Hilbertian. This is a folklore result, which we discuss in details for the reader's usefulness;
similar constructions are typical in homogenisation theory. Theorem \ref{thm:example_asym_cone} could be obtained by constructing a length distance
on \(\R^2\) induced by similar weights as the ones used in Section \ref{s:proof_thm2}. We opted to provide here an alternative and simpler construction.
Before passing to the actual statement, let us briefly remind the relevant terminology.
\smallskip

Let \((\X,\sfd,\mm)\) be a metric measure space. Then we say that a given pointed metric measure space \((\Y,\sfd_\Y,\mm_\Y,q)\)
is a \textbf{pmGH-asymptotic cone} of \((\X,\sfd,\mm)\) provided there exists a sequence of radii \(R_k\nearrow +\infty\) such that
for some (and thus any) point \(p\in{\rm spt}(\mm)\) it holds that
\[
(\X,\sfd/R_k,\mm^{R_k}_p,p)\to(\Y,\sfd_\Y,\mm_\Y,q),\quad\text{ in the pointed measured Gromov--Hausdorff sense.}
\]
Namely, for every \(\varepsilon\in(0,1)\) and \(\mathcal L^1\)-a.e.\ \(R>1\), there exist \(\bar k\in\N\) and a sequence \((\psi^k)_{k\geq\bar k}\)
of Borel mappings \(\psi^k\colon B_{R R_k}(p)\to\Y\) such that the following properties are verified:
\begin{itemize}
\item[\(\rm i'')\)] \(\psi^k(p)=q\),
\item[\(\rm ii'')\)] \(\big|\sfd(x,y)-R_k\,\sfd_\Y\big(\psi^k(x),\psi^k(y)\big)\big|\leq\varepsilon R_k\) holds for every \(x,y\in B_{R R_k}(p)\),
\item[\(\rm iii'')\)] \(B_{R-\varepsilon}(q)\) is contained in the open \(\varepsilon\)-neighbourhood of \(\psi^k\big(B_{R R_k}(p)\big)\),
\item[\(\rm iv'')\)] \(\mm\big(B_{R_k}(p)\big)^{-1}\psi^k_\#\big(\mm|_{B_{R R_k}(p)}\big)\rightharpoonup\mm_\Y|_{B_R(q)}\) as \(k\to\infty\)
in duality with the space of bounded continuous functions \(f\colon\Y\to\R\) having bounded support.
\end{itemize}
\begin{theorem}\label{thm:example_asym_cone}
There exists an infinitesimally Hilbertian, \(\mm\)-rectifiable, Ahlfors regular PI space having a unique, infinitesimally non-Hilbertian asymptotic cone.
\end{theorem}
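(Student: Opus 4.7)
The idea is to mirror Section \ref{s:proof_thm2} but replace the isolated singularity at $0$ with a $\Z^2$-periodic modification, so that the space is locally smooth Riemannian while the large-scale geometry is that of the $\ell^1$-plane. Let $\rho\colon\R^2\to[1,2]$ be a smooth $\Z^2$-periodic function that equals $1$ on a tubular neighbourhood of width $\delta\in(0,1/4)$ of the grid $G\coloneqq(\R\times\Z)\cup(\Z\times\R)$ and equals $2$ on the central portion of every unit cell of $\Z^2$. Define $\sfd_\rho$ as the length distance on $\R^2$ induced by $\rho|\cdot|$ (as in Section \ref{s:proof_thm2}) and set $\mm\coloneqq\mathcal H^2_{\sfd_\rho}$. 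Since $1\leq\rho\leq 2$ we have $\sfd_{\rm Eucl}\leq\sfd_\rho\leq 2\,\sfd_{\rm Eucl}$ and $\mm$ is comparable to $\mathcal L^2$, so the identity map is biLipschitz from $(\R^2,\sfd_\rho)$ to the Euclidean plane. Consequently $(\R^2,\sfd_\rho,\mm)$ is Ahlfors $2$-regular, $\mm$-rectifiable (with the identity chart), and a PI space; Lemma \ref{lem:d_rho_iH} together with the continuity of $\rho$ then yields infinitesimal Hilbertianity.

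\textbf{The asymptotic distance is $\|\cdot\|_1$.} The crucial technical step is the uniform two-sided estimate
\[
\bigl|\sfd_\rho(x,y)-\|x-y\|_1\bigr|\leq C,\qquad\text{for every }x,y\in\R^2,
\]
for some constant $C>0$. The upper bound is exhibited by an explicit path: from $x$ travel $O(1)$ to reach the closest point of $G$, follow an $L$-shaped route entirely along grid lines (where $\rho\equiv 1$) of cost $|x_1-y_1|+|x_2-y_2|$, then travel another $O(1)$ to reach $y$. The lower bound is the classical stable-norm (homogenisation) statement for $\Z^2$-periodic Riemannian metrics on $\R^2$: the asymptotic norm $N(v)\coloneqq\lim_{R\to\infty}\sfd_\rho(0,Rv)/R$ is the minimum $\rho$-cost of a closed loop in the torus $\R^2/\Z^2$ representing $v\in H_1(\R^2/\Z^2,\Z)=\Z^2$. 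In our setting the generators $e_1,e_2$ are realised by straight on-grid loops of cost exactly $1$, and since $\rho\geq 2$ on the interior of every unit cell no off-grid representative of $e_1+e_2$ beats the $L$-shaped on-grid loop of cost $2$; subadditivity and homogeneity of $N$ then give $N=\|\cdot\|_1$. The main obstacle lies precisely here: crude pointwise inequalities such as $\rho\geq 1$ yield only $\sfd_\rho\geq\|\cdot\|_2\geq\|\cdot\|_1/\sqrt 2$, so one must invoke stable-norm theory (or reprove the stable norm in this concrete case by adapting the projection argument from Lemma \ref{lem:bound_d_rho}, showing that a minimising curve can spend only a bounded total arclength in the $\rho=2$ cell interiors and must track the grid up to $O(1)$ corrections).

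\textbf{Verifying pmGH-convergence.} Fix an arbitrary sequence $R_k\nearrow+\infty$ and set $\psi^k(x)\coloneqq x/R_k$. The base-point, approximate-isometry, and $\varepsilon$-dense image conditions in the definition of pmGH-asymptotic cone (with target $(\R^2,\|\cdot\|_1)$) follow immediately from the distance bound above. For the measure-convergence condition, the $\Z^2$-periodicity of $\rho^2$ gives by the standard Riemann--Lebesgue averaging for periodic functions
\[
\rho(R_k\cdot)^2\mathcal L^2\rightharpoonup\bar\rho^2\mathcal L^2,\qquad\text{with }\bar\rho^2\coloneqq\int_{[0,1]^2}\rho^2\,\d\mathcal L^2,
\]
while the distance bound yields $\mm\bigl(B^{\sfd_\rho}_{R_k}(0)\bigr)=2\bar\rho^2 R_k^2(1+o(1))$ (since the $\ell^1$-unit ball has Lebesgue area $2$). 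Dividing, the normalised pushforward $\psi^k_\#\bigl(\mm/\mm(B^{\sfd_\rho}_{R_k}(0))\bigr)$ converges weakly to $\mathcal L^2/2$. Since the limit depends neither on the subsequence nor (by periodicity) on the chosen base point, the asymptotic cone is unique and equals $\bigl(\R^2,\|\cdot\|_1,\mathcal L^2/2,0\bigr)$; this space is infinitesimally non-Hilbertian by Proposition \ref{prop:equiv_iH_rect} (or \cite[Lemma 4.4]{LP20}), since $\|\cdot\|_1$ is not a Hilbert norm.
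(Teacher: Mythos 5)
Your strategy --- a \(\Z^2\)-periodic conformal weight on the plane whose homogenised (stable) norm is non-Euclidean --- is precisely the route the paper mentions and then deliberately avoids; the paper instead takes the grid \((\Z\times\R)\cup(\R\times\Z)\) with the \emph{restricted} \(\ell^\infty\)-distance, thickened by a factor \([0,1]\) to get Ahlfors \(2\)-regularity and the PI property, and computes the asymptotic cone \(\big(\R^2,\|\cdot\|_\infty,8^{-1}\mathcal L^2,0\big)\) directly. So your approach is genuinely different, but as written it has a gap at its central step: the identification of the stable norm with \(\|\cdot\|_1\) is false for the weight you construct. Since \(\rho\equiv 1\) on an open \(\delta\)-neighbourhood of the grid \(G\), competitor curves can cut corners inside that neighbourhood: at each corner of the on-grid staircase from \((0,0)\) to \((n,n)\), replace the two segments of total length \(2\delta\) meeting at the corner by the straight chord of length \(\sqrt2\,\delta\) (every point of which lies within distance \(\delta/2\) of \(G\), hence in \(\{\rho=1\}\)); this saves \((2-\sqrt2)\delta\) per corner, and with roughly \(2n\) corners one gets \(\sfd_\rho\big((0,0),(n,n)\big)\leq 2n-2(2-\sqrt2)\delta n+O(1)\). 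Hence \(N(e_1+e_2)\leq 2-2(2-\sqrt2)\delta<2=\|e_1+e_2\|_1\) and your claimed uniform bound \(\big|\sfd_\rho(x,y)-\|x-y\|_1\big|\leq C\) fails along the diagonal. The premise you invoke for the lower bound (``\(\rho\geq 2\) on the interior of every unit cell'') contradicts your own definition of \(\rho\), and the defect is not an artefact of the tubular neighbourhood: for \emph{any} continuous \(\rho\) with \(\rho=1\) on \(G\), corner-cutting at scale \(\varepsilon\) saves \((2-\sqrt2)\varepsilon\) in length at an extra weight cost of only \(o(\varepsilon)\), so \(N(e_1+e_2)<2\) always.

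The construction can be repaired in two ways. Either abandon smoothness and take \(\rho=\nchi_G+2\nchi_{\R^2\setminus G}\): this is \(\mathcal L^2\)-a.e.\ continuous, so Lemma \ref{lem:d_rho_iH} still yields infinitesimal Hilbertianity; the space remains biLipschitz to the Euclidean plane (hence Ahlfors regular, \(\mm\)-rectifiable and PI); the projection argument from the lower bound of Lemma \ref{lem:bound_d_rho} gives \(\sfd_\rho\geq\|\cdot\|_1\) exactly; and on-grid \(L\)-shaped paths give \(\sfd_\rho\leq\|\cdot\|_1+C\). Alternatively, keep the smooth weight but then prove only that the stable norm \(N\) satisfies \(N(e_1)=N(e_2)=1\) while \(N(e_1+e_2)>\sqrt2\) (so that \(N\) cannot be the Euclidean norm forced by the symmetries of the construction); this needs a genuine quantitative argument showing that near-diagonal curves must either cross the \(\rho=2\) cores or detour around them, and it changes the limit space to \((\R^2,N,c\,\mathcal L^2)\), so the measure normalisation \(\mathcal L^2/2\) (tied to the area of the \(\ell^1\)-ball) must be adjusted. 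The remaining ingredients of your proof --- biLipschitz invariance of the Ahlfors-regularity and PI conditions, Lemma \ref{lem:d_rho_iH} for infinitesimal Hilbertianity, and the Riemann--Lebesgue identification of the limiting measure density --- are sound.
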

\begin{proof}
We endow the grid \(\X\coloneqq(\mathbb Z\times\R)\cup(\R\times\mathbb Z)\) in the plane \(\R^2\) with the distance \(\sfd\),
given by \(\sfd(a,b)\coloneqq\|a-b\|_\infty\) for every \(a,b\in\X\), and with the measure \(\mm\coloneqq\mathcal H^1_\sfd|_\X\).
Consider also the space \(\big(\X\times[0,1],\sfd\times\sfd_{\rm Eucl},\mm\otimes\mathcal L^1|_{[0,1]}\big)\), where \(\mm\otimes\mathcal L^1|_{[0,1]}\)
stands for the product measure and
\[
(\sfd\times\sfd_{\rm Eucl})\big((a,t),(b,s)\big)\coloneqq\sqrt{\sfd(a,b)^2+|t-s|^2},\quad\text{ for every }a,b\in\X\text{ and }t,s\in[0,1].
\]
It is easy to see that \(\big(\X\times[0,1],\sfd\times\sfd_{\rm Eucl},\mm\otimes\mathcal L^1|_{[0,1]}\big)\)
is \((\mm\otimes\mathcal L^1|_{[0,1]})\)-rectifiable, \(2\)-Ahlfors regular, and PI. Using Proposition \ref{prop:equiv_iH_rect}, one can deduce that
\(\big(\X\times[0,1],\sfd\times\sfd_{\rm Eucl},\mm\otimes\mathcal L^1|_{[0,1]}\big)\) is infinitesimally Hilbertian. Observe also that, by virtue of the
fact that \([0,1]\) is bounded, the spaces \(\big(\X\times[0,1],\sfd\times\sfd_{\rm Eucl},\mm\otimes\mathcal L^1|_{[0,1]}\big)\) and \((\X,\sfd,\mm)\)
have the same pmGH-asymptotic cones. Therefore,
to conclude it suffices to prove the following claim: the unique asymptotic cone of \((\X,\sfd,\mm)\)
is given by the infinitesimally non-Hilbertian space \(\big(\R^2,\|\cdot\|_\infty,8^{-1}\mathcal L^2,0\big)\). To this aim, fix any \(\varepsilon\in(0,1)\), \(R>1\), and
\(R_k\nearrow +\infty\). We define the Borel maps \(\psi^k\colon B_{RR_k}^\sfd(0)\to\R^2\) as \(\psi^k(a)\coloneqq a/R_k\) for every \(a\in B_{RR_k}^\sfd(0)\).
Our goal is to show that the sequence \((\psi^k)_{k\in\N}\) verifies the items \(\rm i'')\), \(\rm ii'')\), \(\rm iii'')\), and \(\rm iv'')\) above,
with target \(\big(\R^2,\|\cdot\|_\infty,8^{-1}\mathcal L^2,0\big)\).\\
{\color{blue}\(\rm i'')\)} \(\psi^k(0)=0\) by construction.\\
{\color{blue}\(\rm ii'')\)} It follows from the fact that \(\psi^k\) is an isometry from \(\big(B_{RR_k}^\sfd(0),\sfd\big)\) to \(\big(\R^2,R_k\|\cdot\|_\infty\big)\).\\
{\color{blue}\(\rm iii'')\)} Pick \(\bar k\in\N\) so that \(1/R_{\bar k}<\varepsilon\). Let \(v\in B_{R-\varepsilon}^{\|\cdot\|_\infty}(0)\) and \(k\geq\bar k\) be given.
Since \(R_k v\in B_{RR_k}^{\|\cdot\|_\infty}(0)\), we can find \(a\in \X\cap B_{RR_k}^{\|\cdot\|_\infty}(0)=B_{RR_k}^\sfd(0)\) with \(\|a-R_k v\|_\infty<1\).
This yields \(\big\|\psi^k(a)-v\big\|_\infty<\varepsilon\), thus accordingly \(B_{R-\varepsilon}^{\|\cdot\|_\infty}(0)\) is contained in the
\(\varepsilon\)-neighbourhood of \(\psi^k\big(B_{RR_k}^\sfd(0)\big)\), as desired.\\
{\color{blue}\(\rm iv'')\)} For any \(i,j\in\mathbb Z\) and \(k\in\N\), we define the sets \(Q_{ij}\coloneqq(i-2^{-1},i+2^{-1})\times(j-2^{-1},j+2^{-1})\) and
\(S_k\coloneqq\bigcup_{|i|,|j|<\lfloor RR_k\rfloor}Q_{ij}\), where \(\lfloor\lambda\rfloor\in\N\) stands for the integer part of \(\lambda\in[0,+\infty)\).
Notice that \(\mm\big(B_{\lfloor RR_k\rfloor+1}^{\|\cdot\|_\infty}(0)\setminus S_k\big)=20\lfloor RR_k\rfloor-18\eqqcolon a_k\) and thus
\(\mm\big(B_{RR_k}^\sfd(0)\setminus S_k\big)\leq a_k\). Moreover, calling \(\tilde S_k\coloneqq S_k/R_k\), we have
\(\mathcal L^2\big(B_R^{\|\cdot\|_\infty}(0)\setminus\tilde S_k\big)\leq 8R\big((2R_k)^{-1}+R-\lfloor RR_k\rfloor/R_k\big)\eqqcolon b_k\).
Fix any bounded, continuous function \(f\colon\R^2\to[0,+\infty)\) having compact support. Pick \(C>0\) such that \(f\leq C\).
Given that \(\X\cap B_{\lfloor R_k\rfloor}^{\|\cdot\|_\infty}(0)\subset B_{R_k}^\sfd(0)\) and \(\mm\big(B_{\lfloor R_k\rfloor}^{\|\cdot\|_\infty}(0)\big)
=8\lfloor R_k\rfloor^2-4\lfloor R_k\rfloor\eqqcolon c_k\), we deduce that \(\big|\int_{B_R^{\|\cdot\|_\infty}(0)\setminus\tilde S_k}f\,\d\mathcal L^2\big|\leq C b_k\)
and \(\big|m_k^{-1}\int_{B_{RR_k}^\sfd(0)\setminus S_k}f\circ\psi^k\,\d\mm\big|\leq Ca_k/c_k\), where we set \(m_k\coloneqq\mm\big(B_{R_k}^\sfd(0)\big)\).
Now fix any \(\delta>0\) and choose \(\bar k\in\N\) such that \(\big|f(a)-f(b)\big|\leq\delta\) for every \(a,b\in\R^2\) with \(\|a-b\|_\infty<1/R_{\bar k}\).
Setting \(\rho_k\coloneqq m_k^{-1}\sum_{|i|,|j|<\lfloor RR_k\rfloor}f\big((i,j)/R_k\big)\) for every \(k\geq\bar k\), we obtain that
\[\begin{split}
&\bigg|\frac{1}{m_k}\int f\,\d\psi^k_\#\big(\mm|_{B_{RR_k}^\sfd(0)}\big)-\frac{1}{8}\int_{B_R^{\|\cdot\|_\infty}(0)} f\,\d\mathcal L^2\bigg|\\
\leq\,&\bigg|\frac{1}{m_k}\int_{S_k}f\circ\psi^k\,\d\mm-\frac{1}{8}\int_{\tilde S_k}f\,\d\mathcal L^2\bigg|+C\bigg(\frac{a_k}{c_k}+\frac{b_k}{8}\bigg)\\
\leq\,&\bigg|\frac{1}{m_k}\int_{S_k}f\circ\psi^k\,\d\mm-\rho_k\bigg|+\bigg|\rho_k-\frac{1}{8}\int_{\tilde S_k}f\,\d\mathcal L^2\bigg|
+C\bigg(\frac{a_k}{c_k}+\frac{b_k}{8}\bigg).
\end{split}\]
The first addendum in the last line of the above formula can be estimated as
\[\begin{split}
\bigg|\frac{1}{m_k}\int_{S_k}f\circ\psi^k\,\d\mm-\rho_k\bigg|&\leq
\frac{1}{c_k}\sum_{|i|,|j|<\lfloor RR_k\rfloor}\bigg|\int_{Q_{ij}}f\circ\psi^k\,\d\mm-f\big((i,j)/R_k\big)\bigg|\\
&=\frac{1}{c_k}\sum_{|i|,|j|<\lfloor RR_k\rfloor}\bigg|\fint_{Q_{ij}/R_k}f\,\d\big(\mathcal H^1_{|\cdot|}|_{\X/R_k}\big)-f\big((i,j)/R_k\big)\bigg|\\
&\leq\frac{1}{c_k}\sum_{|i|,|j|<\lfloor RR_k\rfloor}\delta=\frac{\big(2\lfloor RR_k\rfloor-1\big)^2\delta}{c_k},
\end{split}\]
while the second one can be estimated as
\[\begin{split}
\bigg|\rho_k-\frac{1}{8}\int_{\tilde S_k}f\,\d\mathcal L^2\bigg|
&\leq\frac{1}{8R_k^2}\sum_{|i|,|j|<\lfloor RR_k\rfloor}\bigg|\frac{8R_k^2}{m_k}f\big((i,j)/R_k\big)-\fint_{Q_{ij}/R_k}f\,\d\mathcal L^2\bigg|\\
&\leq\frac{1}{8R_k^2}\sum_{|i|,|j|<\lfloor RR_k\rfloor}\bigg(\bigg|\frac{8R_k^2}{m_k}-1\bigg|C+\bigg|f\big((i,j)/R_k\big)-\fint_{Q_{ij}/R_k}f\,\d\mathcal L^2\bigg|\bigg)\\
&\leq\frac{\big(2\lfloor RR_k\rfloor-1\big)^2}{c_k}\bigg(\bigg|\frac{8R_k^2}{m_k}-1\bigg|C+\delta\bigg).
\end{split}\]
Since \(B_{R_k}^\sfd(0)\subset B_{\lfloor R_k\rfloor+1}^{\|\cdot\|_\infty}(0)\), we also have \(m_k\leq\mm\big(B_{\lfloor R_k\rfloor+1}^{\|\cdot\|_\infty}(0)\big)
=8\lfloor R_k\rfloor^2+12\lfloor R_k\rfloor+4\). Then
\[
\lim_{k\to\infty}\frac{a_k}{c_k}=\lim_{k\to\infty}b_k=0,\qquad\limsup_{k\to\infty}\frac{\big(2\lfloor RR_k\rfloor-1\big)^2}{c_k}\leq\frac{R^2}{2},
\qquad\lim_{k\to\infty}\bigg|\frac{8R_k^2}{m_k}-1\bigg|=0.
\]
Therefore, by letting \(k\to\infty\) in the previous estimates we deduce that
\[
\limsup_{k\to\infty}\bigg|\frac{1}{\mm\big(B_{R_k}^\sfd(0)\big)}\int f\,\d\psi^k_\#\big(\mm|_{B_{RR_k}^\sfd(0)}\big)-
\frac{1}{8}\int_{B_R^{\|\cdot\|_\infty}(0)} f\,\d\mathcal L^2\bigg|\leq R^2\delta.
\]
By arbitrariness of \(\delta\) and \(f\), we conclude that \(\mm\big(B_{R_k}^\sfd(0)\big)^{-1}\psi^k_\#\big(\mm|_{B_{RR_k}^\sfd(0)}\big)
\rightharpoonup 8^{-1}\mathcal L^2|_{B_R^{\|\cdot\|_\infty}(0)}\) in duality with bounded continuous functions having compact support, as desired.
\end{proof}
\begin{remark}{\rm
It is easy to show that the space \(\X\times[0,1]\) in Theorem \ref{thm:example_asym_cone} can be additionally required to be a Riemannian manifold.
Indeed, it simply suffices to smoothen the boundary of the set
\[
\bigcup_{x\in\X\times[0,1]}B_{r_x}^{\R^3}(x)\subset\R^3,\quad\text{ where }r_x\coloneqq\frac{1}{\max\{4,|x|\}},
\]
in order to obtain an embedded submanifold with the desired properties.
\fr}\end{remark}
\def\cprime{$'$} \def\cprime{$'$}

\end{document}